\newtheorem{theorem}{Theorem}[section]
\newtheorem{lemma}[theorem]{Lemma}
\newtheorem{remark}[theorem]{Remark}
\newtheorem{prop}[theorem]{Proposition}
\numberwithin{equation}{section}
\newcommand{\R}{{\mathbb R}}
\newcommand{\D}{{\mathbb D}}
\newcommand{\C}{{\mathbb C}}
\newcommand{\N}{{\mathbb N}}
\newcommand{\T}{{\mathbb T}}
\newcommand{\cL}{{\mathcal L}}
\newcommand{\ve}{\varepsilon}
\newcommand{\be}{\beta}
\newcommand{\si}{\sigma}
\newcommand{\su}{\subseteq}
\newcommand\Ker{\mathop{\rm Ker}}
\newcommand{à}{\`a}
\begin{document}

\title{Generalized Ces\`aro operators in the disc algebra and in   Hardy spaces}

\author{Angela\,A. Albanese, Jos\'e Bonet and Werner\,J. Ricker}

\thanks{\textit{Mathematics Subject Classification 2020:}
Primary 46E15; Secondary  47B38.}
\keywords{Generalized Ces\`aro operator,  disc algebra, Hardy space}
\thanks{Article accepted for publication in Advances in Operator Theory.}

\address{ Angela A. Albanese\\
Dipartimento di Matematica e Fisica
``E. De Giorgi''\\
Universit\`a del Salento- C.P.193\\
I-73100 Lecce, Italy}
\email{angela.albanese@unisalento.it}

\address{Jos\'e Bonet \\
Instituto Universitario de Matem\'{a}tica Pura y Aplicada
IUMPA \\
Edificio IDI5 (8E), Cubo F, Cuarta Planta \\
Universitat Polit\`ecnica de Val\`encia \\
E-46071 Valencia, Spain} \email{jbonet@mat.upv.es}

\address{Werner J.  Ricker \\
Math.-Geogr. Fakultät \\
 Katholische Universität
Eichst\"att-Ingol\-stadt \\
D-85072 Eichst\"att, Germany}
\email{werner.ricker@ku.de}

\begin{abstract}
Generalized Cesàro operators $C_t$, for $t\in [0,1)$, are investigated when they act on the disc algebra  $A(\D)$ and on the Hardy spaces $H^p$, for $1\leq p \leq \infty$. We study the continuity, compactness, spectrum and point spectrum of $C_t$ as well as their linear dynamics and mean ergodicity on these spaces.
\end{abstract}

\maketitle

\markboth{A.\,A. Albanese, J. Bonet and W.\,J. Ricker}%
{\MakeUppercase{Generalized Ces\`aro operators}}

\textit{Dedicated to the memory of Prof. Albrecht Pietsch}

\section{Introduction and Preliminaries}

The (discrete) generalized Cesàro operators $C_t$, for $t\in [0,1]$, were first investigated by Rhaly, \cite{R1,R2}. The action of $C_t$  from the sequence space $\omega:=\C^{\N_0}$ into itself, with $\N_0:=\{0,1,2,\ldots\}$, is given by
\begin{equation}\label{Ces-op}
	C_tx:=\left(\frac{t^nx_0+t^{n-1}x_1+\ldots +x_n}{n+1}\right)_{n\in\N_0},\quad x=(x_n)_{n\in\N_0}\in\omega.
	\end{equation}
For $t=0$ and with $\varphi:=(\frac{1}{n+1})_{n\in\N_0}$ note that $C_0$ is the diagonal operator
\begin{equation}\label{Dia-op}
	D_\varphi x:= \left(\frac{x_n}{n+1}\right)_{n\in\N_0}, \quad x=(x_n)_{n\in\N_0}\in\omega,
	\end{equation}
and, for $t=1$, that $C_1$ is the classical Ces\`aro averaging operator
\begin{equation}\label{Ces-1}
	C_1x:=\left(\frac{x_0+x_1+\ldots+x_n}{n+1}\right)_{n\in\N_0},\quad x=(x_n)_{n\in\N_0}\in\omega.
\end{equation}

The behaviour of $C_t$ on certain sequence spaces is well understood; see
 \cite{ABR-N,CR4,S-E}, and the references therein.

 By $H(\D)$ we denote the space of all holomorphic functions on  $\D:=\{z\in\C\,:\, |z|<1\}$, which is
 equipped with the topology $\tau_c$ of uniform convergence on the compact subsets of  $\D$. According to \cite[\S 27.3(3)]{23} the space $H(\D)$ is a Fr\'echet-Montel space. A family of norms generating $\tau_c$ is given, for each $0<r<1$, by
\begin{equation}\label{eq.norme-sup}
	q_r(f):=\sup_{|z|\leq r}|f(z)|,\quad f\in H(\D).
\end{equation}
We identify  a function $f\in H(\D)$ with its sequence of Taylor coefficients $\hat{f}:=(\hat{f}(n))_{n\in\N_0}$ (i.e., $\hat{f}(n):=\frac{f^{(n)}(0)}{n!}$,  for $n\in\N_0$),  so that $f(z)=\sum_{n=0}^\infty \hat{f}(n)z^n$, for $z\in\D$.
The operator $C_t\colon H(\D)\to H(\D)$, for $t\in [0,1)$,   given by $ (C_tf)(0):=f(0)$ and
\begin{equation}\label{eq.formula-int}
(C_tf)(z):=\frac{1}{z}\int_0^z\frac{f(\xi)}{1-t\xi}\,d\xi,\ z\in \D\setminus\{0\},
\end{equation}
for every $f \in H(\D)$, was investigated in \cite{ABR-NN}.  For $f(z)=\sum_{n=0}^\infty \hat{f}(n)z^n$ in $H(\D)$, we also have that
\begin{align}\label{eq.rapp-serie}
	(C_tf)(z)=\sum_{n=0}^\infty\left(\frac{t^n\hat{f}(0)+t^{n-1}\hat{f}(1)+\ldots +\hat{f}(n)}{n+1}\right)z^{n}.
\end{align}
Note that the coefficients in \eqref{eq.rapp-serie} are  as in \eqref{Ces-op}.

The discrete generalized Cesàro operator acting in $\omega$ (cf. \eqref{Ces-op}) is denoted by
 $C_t^\omega$, whereas  the notation $C_t$ will be used for the operator \eqref{eq.formula-int} acting in $H(\D)$. Note that $C_0^\omega=D_\varphi$ (see \eqref{Dia-op}). Moreover, for $t=0$ observe that the operator $(C_0f)(z)=\frac{1}{z}\int_0^z f(\xi)\,d\xi$ for $z\not=0$ and $(C_0f)(0)=f(0)$ is the traditional Hardy operator in $H(\D)$.

 The  generalized Cesàro operators $C_t$, for $t\in [0,1]$, are  investigated in \cite{ABR-NN}, where they act on the Fr\'echet space $H(\D)$  and on the weighted Banach spaces $H_v^\infty$ and $H_v^0$ endowed with with their sup-norms. The operator $C_t$ is actually continuous on $H(\D)$,  \cite[Proposition 2.1]{ABR-NN}. In  this article
 we study the operators $C_t$ when they act   on the disc algebra  $A(\D)$ and on the Hardy spaces $H^p$ over $\D$, for $1\leq p\leq\infty$.

 Section 2 establishes the continuity and  compactness of $C_t$ acting on the spaces $A(\D)$ and $H^\infty$ and  determines their spectrum. It is also shown that each $C_t$, for $t\in [0,1)$, is power bounded and uniformly mean ergodic in these spaces but, it fails to be supercyclic. In Section 3 the continuity, compactness and spectrum of $C_t$ are investigated when they act in the Hardy spaces $H^p$ for $1\leq p<\infty$. Estimates for the operator norm $\|C_t\|_{H^p\to H^p}$ are also provided. To establish the compactness of $C_t$ in $H^p$ it is necessary to invoke properties of the forward and backward shift operators acting in $H^p$ as well as properties of certain   Volterra type operators. An important point is that each $C_t$, for $t\in (0,1)$, belongs to the class of Volterra operators  being considered. It is also established that $C_t$ maps $H^\infty$ into $A(\D)$. The spectrum and point spectrum of $C_t$ (acting on $H^p$) are completely determined. As in the case when $C_t$ acts in $A(\D)$ or $H^\infty$, it turns out that $C_t$ is again power bounded and uniformly mean ergodic when acting in $H^p$, $1\leq p<\infty$, but fails to be supercyclic.

  Let us briefly recall the definition of the spaces involved; see \cite{Du,Z} for more details.
The space $H^\infty$ is a Banach space when it is endowed with the norm
\begin{equation}\label{eq.supnorm}
	\|f\|_\infty:=\sup_{z\in\D}|f(z)|,\quad f\in H^\infty.
\end{equation}
 	The disc algebra $A(\D)$  consists of all holomorphic functions on  $\D$
 	 that extend to a continuous function on the closure $\overline{\D}$ of $\D$. That is, $f\in A(\D)$ if and only if there exists $\tilde{f}\in C(\overline{\D})$, necessarily unique, such that $f(z)=\tilde{f}(z)$ for all $z\in\D$. In particular, $f\in H^\infty$.
 	  	Endowed with the norm $\|\cdot\|_\infty$ the space $A(\D)$ is a commutative Banach algebra with respect to multiplication of functions. It is routine to verify that  $A(\D)$ is a closed subalgebra of $H^\infty$.
 	
 	 For $0< p<\infty$ the Hardy space $H^p$ consists of all  functions $f\in H(\D)$  satisfying
 	 \begin{equation}\label{eq.pnorm}
 	 	\|f\|_p:=\sup_{0\leq r<1}\left(\frac{1}{2\pi}\int_0^{2\pi}|f(re^{i\theta})|^p\,d\theta\right)^{1/p}<\infty.
 	 	\end{equation}
  	For $p=\infty$, a function $f\in H(\D)$ belongs to $H^\infty$ if
  	\begin{equation}\label{eq.normI}
  		\sup_{0\leq r<1}\left(\sup_{\theta\in [0,2\pi]}|f(re^{i\theta})|\right)<\infty.
  	\end{equation}
  Of course, the expression in \eqref{eq.normI} equals $\|f\|_\infty$; see \eqref{eq.supnorm}.
  	With this quasi-norm,  $H^p$ is a metrizable, complete, topological vector space. For $1\leq p< \infty$, it turns out that $\|\cdot\|_p$ is actually a norm for which $H^p$ is a Banach space.
  	
  	We are mainly interested in the setting when $1\leq p<\infty$. In this case,   for all $1\leq  p\leq  q \leq \infty$, it turns out that $H^q\subseteq H^p$ with a continuous inclusion. Moreover,  the norm  $\|\cdot \|_p$ is increasing with $p$.
  	Given $1\leq p<\infty$ and $f\in H^p$, there is a standard notation for the terms in \eqref{eq.pnorm}, namely
  	\[
  	M_p(r,f):=\left(\frac{1}{2\pi}\int_0^{2\pi}|f(re^{i\theta})|^p\,d\theta\right)^{1/p}, \quad r\in [0,1),
  	\]
  	which increase with $0\leq r\uparrow 1$. Accordingly, for every $r\in [0,1)$, we have
  	\begin{equation}\label{eq.disnorm}
  		M_p(r,f)\leq \|f\|_p=\sup_{0\leq r<1}M_p(r,f)=\lim_{r\to1^-}M_p(r,f).
  	\end{equation}
  The same is true for $p=\infty$ if we define
  \[
  M_\infty (r,f):=\sup_{\theta\in [0,2\pi]}|f(re^{i\theta})|,\quad r\in [0,1).
  \]
  	It is known, for each $z\in\D$, that the evaluation functional $\delta_z\colon f\mapsto f(z)$, for $f\in H^p$, is continuous, that is, $\delta_z\in (H^p)'$; see the Lemma on p.36  of \cite{Du} for $1\leq p<\infty$. For $p=\infty$, fix $z\in\D$ and let $r:=|z|$. Then the inequality
  	\[
  	|\delta_z(f)|=|f(z)|\leq M_\infty(r,f)\leq \|f\|_\infty,\quad f\in H^\infty,
  	\]
  	shows that $\delta_z\in (H^\infty)'$.
  	
We end this section by recalling a few definitions  and some notation concerning Fr\'echet  spaces (always locally convex) and operators between them. For further details about functional analysis and operator theory relevant to this paper see, for example, \cite{Ed,23,24,Ru}.

Given Fr\'echet spaces $X, Y$,  denote by $\cL(X,Y)$ the space of all  linear operators from $X$ into $Y$ which are continuous. If $X=Y$, then we simply write $\cL(X)$ for $\cL(X,X)$. If both $X$, $Y$ are Banach spaces then, for the operator norm $\|T\|_{X\to Y}:=\sup_{\|x\|_X\leq 1}\|Tx\|_Y$, with $T\in \cL(X,Y)$, the space $\cL(X,Y)$ is a Banach space.
Equipped with the topology of pointwise convergence on a Fr\'echet space $X$ (i.e., the strong operator topology $\tau_s$) the quasi-complete locally convex Hausdorff space $\cL(X)$ is denoted by $\cL_s(X)$. The range $T(X):=\{Tx:\ x\in X\}$ of $T\in \cL(X)$ is also denoted by ${\rm Im}(T)$. Moreover, $\Ker(T):=\{x\in X:\ Tx=0\}$.

Let $X$ be a Fr\'echet space.
 The identity operator on $X$ is written as $I$.  The \textit{transpose operator} of $T\in \cL(X)$ is denoted by  $T'$; it acts from the topological dual space $X':=\cL(X,\C)$ of $X$ into itself. Denote by $X'_\si$ (resp., by $X'_\beta$) the space $X'$ equipped with the weak* topology $\si(X',X)$ (resp., with the strong dual topology $\beta(X',X)$). It is known that $X'_\si$ is quasicomplete, that $T'\in \cL(X'_\si)$ and also that $T'\in \cL(X_\be')$,  \cite[p.134]{24}. The bi-transpose operator $(T')'$ of $T$ is  denoted simply by $T''$ and belongs to $\cL((X'_\beta)'_\beta)$. In the event that $X$ is a Banach space, both $X'_\beta$ (denoted simply by $X'$) and $(X'_\beta)'_\beta$ (denoted simply by $X''$) are again Banach spaces. The dual norm in $X'$ is given $\|x'\|:=\sup_{\|x\|\leq 1}|\langle x,x'\rangle|$, for $x'\in X'$.

The following result, \cite[Theorem 6.4]{ABR-N}, which is  stated below for Banach spaces, will be  needed in the sequel. Given $\delta>0$, we introduce the notation $B(0,\delta):=\{z\in\C:\ |z|<\delta\}$ and $\overline{B(0,\delta)}$ for its closure in $\C$.  Denote the unit circle in $\C$ by $\mathbb{T}:=\{z\in\C :\ |z|=1\}$. Of course, $B(0,1)=\D$ and $\mathbb{T}$ is the boundary of $\D$. For the definition of power bounded and uniformly mean ergodic operators see Section 2.

\begin{theorem}\label{Th-ABR} Let $X$ be a Banach space and let $T\in \cL(X)$ be a compact operator such that $\sigma(T;X)\subseteq \overline{\D}$ and $\sigma(T;X)\cap \T=\{1\}$
	  and which satisfies  $\Ker (I-T)\cap {\rm Im} (I-T)=\{0\}$. Then $T$ is power bounded and uniformly mean ergodic.
\end{theorem}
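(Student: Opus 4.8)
The plan is to prove the theorem through the Riesz spectral decomposition of $X$ associated with the isolated spectral value $1$, and then to read off both power boundedness and uniform mean ergodicity from the resulting splitting. First I would note that, since $T$ is compact, its nonzero spectrum consists only of eigenvalues with no accumulation point other than $0$; together with $\sigma(T;X)\cap\T=\{1\}$ this shows that $1$ is an \emph{isolated} point of $\sigma(T;X)$, and by Riesz--Schauder theory it is a pole of the resolvent $z\mapsto(zI-T)^{-1}$. The associated Riesz projection
$$
P:=\frac{1}{2\pi i}\int_{\gamma}(zI-T)^{-1}\,dz,
$$
where $\gamma$ is a small positively oriented circle enclosing $1$ and no other point of $\sigma(T;X)$, is a bounded idempotent commuting with $T$, and it yields the topological direct sum $X={\rm Im}(P)\oplus\Ker(P)$ with both summands $T$-invariant. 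Because $1\neq 0$ is an eigenvalue of a compact operator, ${\rm Im}(P)$ is finite-dimensional.

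The crucial step is to use the hypothesis $\Ker(I-T)\cap{\rm Im}(I-T)=\{0\}$ to show that the pole at $1$ is \emph{simple}. Writing $S:=I-T$, this hypothesis is equivalent to $\Ker(S)=\Ker(S^2)$, i.e.\ to the ascent of $S$ being at most $1$: if $y\in\Ker(S^2)$ then $Sy\in\Ker(S)\cap{\rm Im}(S)=\{0\}$, forcing $y\in\Ker(S)$, and the converse implication is immediate. Since $1$ is an eigenvalue (being an isolated spectral point of a compact operator, it is a pole, hence an eigenvalue), the ascent is exactly $1$. As for an isolated spectral point the order of the resolvent pole coincides with the ascent (equivalently the descent) of $S$, the pole has order one. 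Consequently the nilpotent part of $T$ on the finite-dimensional space ${\rm Im}(P)$ vanishes, so that $T|_{{\rm Im}(P)}$, having sole eigenvalue $1$, is the identity; equivalently $TP=P$. On the complementary summand, $T_0:=T|_{\Ker(P)}$ has spectrum $\sigma(T;X)\setminus\{1\}\subseteq\ov{\D}$ which does not meet $\T$, so its spectral radius satisfies $r(T_0)<1$.

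With the splitting $T^n=P+T_0^{\,n}(I-P)$ the conclusions follow from elementary estimates. Since $r(T_0)<1$ we have $\sup_n\|T_0^{\,n}\|<\infty$ (indeed $\|T_0^{\,n}\|\to 0$), whence $\sup_n\|T^n\|\le\|P\|+(\sup_n\|T_0^{\,n}\|)\,\|I-P\|<\infty$, so $T$ is power bounded. For the Ces\`aro means $T_{[n]}:=\frac{1}{n}\sum_{k=1}^{n}T^k$ one has $T_{[n]}P=P$, while on $\Ker(P)$ the invertibility of $I-T_0$ gives
$$
\frac{1}{n}\sum_{k=1}^{n}T_0^{\,k}=\frac{1}{n}\,T_0(I-T_0)^{-1}\bigl(I-T_0^{\,n}\bigr)\longrightarrow 0
$$
in operator norm, because the bracketed factor stays bounded and is divided by $n$. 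Hence $T_{[n]}\to P$ in $\cL(X)$, i.e.\ $T$ is uniformly mean ergodic.

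The main obstacle is the middle paragraph: rigorously identifying the order of the resolvent pole at $1$ with the ascent of $I-T$, and thereby deducing that the nilpotent part on ${\rm Im}(P)$ is zero. Everything else is a routine consequence of the Riesz decomposition together with the fact that $r(T_0)<1$, which is what makes both the supremum of $\|T_0^{\,n}\|$ finite and the averaged geometric series converge to $0$ in norm.
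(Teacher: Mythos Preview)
The paper does not prove this theorem; it is quoted from \cite[Theorem~6.4]{ABR-N} and used as a black box in Sections~2 and~3, so there is no in-paper argument to compare against. Your proposal is correct and is the standard route: compactness of $T$ makes $1$ an isolated point of $\sigma(T;X)$ and a pole of the resolvent; the hypothesis $\Ker(I-T)\cap{\rm Im}(I-T)=\{0\}$ is precisely the statement that the ascent of $I-T$ is at most $1$, which forces the pole to be simple and hence $TP=P$ on the Riesz range; and since $\sigma(T_0)=\sigma(T;X)\setminus\{1\}$ is a compact subset of the open disc $\D$, one has $r(T_0)<1$, so that $T^n=P+T_0^{\,n}(I-P)$ immediately yields both power boundedness and the uniform convergence $T_{[n]}\to P$. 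The step you flag as the main obstacle---identifying the order of the pole with the ascent of $I-T$---is a classical part of Riesz--Schauder theory and is entirely safe to invoke.
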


\begin{remark}\label{R.12}\rm  The conditions in Theorem \ref{Th-ABR} concerning $\sigma(T;X)$  are equivalent to
 	$1\in \sigma(T;X)$ and $\sigma(T;X)\setminus \{1\}\subseteq \overline{B(0,\delta)}$, for some $\delta\in (0,1)$.
 	\end{remark}

\section{Continuity, Compactness and Spectrum of $C_t$  in $H^\infty$ and in $A(\D)$}


For $t=1$ it is clear that
 $C_1$ fails to act from  $H^\infty$ into itself; consider $C_1\mathbbm{1}$, where $\mathbbm{1}$ is the constant function $z\mapsto 1$ for $z\in\D$. For a study of $C_1$ acting on $H^\infty$ see \cite{D-S}
 and, for $C_t$ acting on $H^\infty$,  for $t\in [0,1)$, see  \cite[Proposition 2.3]{ABR-NN}.


The following result establishes that $C_t\in \cL(A(\D))$ and identifies its operator norm precisely.

\begin{prop}\label{Cont_A_D} For each $t\in [0,1)$ the  operator $C_t\colon A(\D)\to A(\D)$ is continuous. Moreover, the operator norms are given by  $\|C_0\|_{A(\D)\to A(\D)}=1$ and
	$$
	\|C_t\|_{A(\D)\to A(\D)}=\frac{-\log (1-t)}{t},\quad t\in (0,1).
	$$
\end{prop}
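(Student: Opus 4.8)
The plan is to base everything on the integral representation obtained from \eqref{eq.formula-int} via the substitution $\xi = sz$, namely
\begin{equation*}
(C_tf)(z) = \int_0^1 \frac{f(sz)}{1-tsz}\,ds, \qquad z\in\D,
\end{equation*}
which is valid for every $f\in H(\D)$ and every $t\in[0,1)$; at $z=0$ the right-hand side returns $f(0)$, in agreement with the definition of $C_tf$, and the path from $0$ to $z$ in \eqref{eq.formula-int} may be taken to be the segment $[0,z]$ since the integrand there is holomorphic on the simply connected domain $\D$.

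First I would check that $C_t$ maps $A(\D)$ into itself. Fix $f\in A(\D)$ with continuous extension $\tilde f\in C(\overline{\D})$. Since $t<1$, for all $(s,z)\in[0,1]\times\overline{\D}$ we have $|1-tsz|\geq 1-t>0$, so the map $(s,z)\mapsto \tilde f(sz)/(1-tsz)$ is continuous on the compact set $[0,1]\times\overline{\D}$. Hence $z\mapsto \int_0^1 \tilde f(sz)/(1-tsz)\,ds$ is continuous on $\overline{\D}$, and by the representation above it coincides with $C_tf$ on $\D$. Accordingly $C_tf$ extends continuously to $\overline{\D}$, that is, $C_tf\in A(\D)$.

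Next comes the operator norm. For $z\in\D$ and $f\in A(\D)$, the estimate $|1-tsz|\geq 1-ts|z|\geq 1-ts$ yields
\begin{equation*}
|(C_tf)(z)| \leq \int_0^1 \frac{|f(sz)|}{1-ts}\,ds \leq \|f\|_\infty\int_0^1 \frac{ds}{1-ts},
\end{equation*}
and the elementary integral equals $-t^{-1}\log(1-t)$ for $t\in(0,1)$ and $1$ for $t=0$. Taking the supremum over $z\in\D$ shows that $C_t\in\cL(A(\D))$ with $\|C_t\|_{A(\D)\to A(\D)}\leq \frac{-\log(1-t)}{t}$ for $t\in(0,1)$ (resp.\ $\leq 1$ for $t=0$). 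For the reverse inequality I would test on $\mathbbm{1}$: here $\|\mathbbm{1}\|_\infty=1$ and $(C_t\mathbbm{1})(z)=\int_0^1 (1-tsz)^{-1}\,ds$, so by the continuity established above the extension of $C_t\mathbbm{1}$ to $\overline{\D}$ takes at $z=1$ the value $\int_0^1 (1-ts)^{-1}\,ds = \frac{-\log(1-t)}{t}$ when $t\in(0,1)$; hence $\|C_t\|_{A(\D)\to A(\D)}\geq |(C_t\mathbbm{1})(1)| = \frac{-\log(1-t)}{t}$, and together with the upper bound this forces equality. For $t=0$ one simply observes $C_0\mathbbm{1}=\mathbbm{1}$, so $\|C_0\|_{A(\D)\to A(\D)}=1$.

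None of the individual steps is difficult; the only point requiring a little care is the passage to the boundary circle, i.e.\ verifying both that $C_tf$ genuinely lies in $A(\D)$ and that the supremum defining $\|C_t\mathbbm{1}\|_\infty$ is attained (in the limit $r\to 1^-$) at the point $z=1$. Both rest on the uniform bound $|1-tsz|\geq 1-t$ on $[0,1]\times\overline{\D}$, which is available precisely because $t<1$; for $t=1$ the operator does not even act on $H^\infty$, as already noted for $C_1\mathbbm{1}$.
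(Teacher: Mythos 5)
Your proof is correct, and its skeleton matches the paper's: the same parametrized representation $(C_tf)(z)=\int_0^1 f(sz)/(1-tsz)\,ds$, the same argument that $z\mapsto\int_0^1\tilde f(sz)/(1-tsz)\,ds$ is continuous on $\overline{\D}$ (using $|1-tsz|\ge 1-t$), and the same extremal function $\mathbbm{1}$ for the lower bound. The one place where you genuinely deviate is the continuity/upper-bound step: the paper first proves only the inclusion $C_t(A(\D))\subseteq A(\D)$, then invokes a closed graph argument (via the continuity of $C_t$ on $H(\D)$) to get $C_t\in\cL(A(\D))$, and quotes \cite[Proposition 2.3]{ABR-NN} for the estimate $\|C_tf\|_\infty\le\frac{-\log(1-t)}{t}\|f\|_\infty$; you instead derive that estimate directly from $|1-tsz|\ge 1-ts$ and $\int_0^1\frac{ds}{1-ts}=\frac{-\log(1-t)}{t}$, which yields boundedness at once and makes the proof self-contained. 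You also spell out, more carefully than the paper does, why $\|C_t\mathbbm{1}\|_\infty$ actually equals $\frac{-\log(1-t)}{t}$, namely by evaluating the continuous extension at the boundary point $z=1$; the paper simply asserts $\|C_th_0\|_\infty=\frac{-\log(1-t)}{t}$. So your route buys independence from the external reference at the cost of nothing, while the paper's route is shorter on paper because it recycles the $H^\infty$ result already proved in \cite{ABR-NN}.
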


\begin{proof} Fix $t\in [0,1)$. We first show that $C_t (A(\D)) \subseteq A(\D)$. Recall,  for $f\in H(\D)$,  that  we have $(C_tf)(0)=f(0)$ and, via the parametrization $\xi:=sz$, for $s\in [0,1]$, that
	\begin{equation}\label{eq.op_p}
	(C_tf)(z)=\frac{1}{z}\int_0^z\frac{f(\xi)}{1-t\xi}\,d\xi=\int_0^1\frac{f(sz)}{1-stz}\,ds, \quad z\in \D\setminus\{0\}.
	\end{equation}
Moreover, if $f\in H^\infty$ then $C_tf\in H^\infty$,  \cite[Proposition 2.3]{ABR-NN}. Accordingly, for a fixed function $f\in A(\D)\subseteq H^\infty$, it follows that $C_tf\in H^\infty$. The claim is that  $C_tf$ is the restriction to $\D$ of an element of $C(\overline{\D})$. Indeed, let $\tilde{f}\in C(\overline{\D})$ agree with $f$ on $\D$. Then it is routine to  verify that the function $z\mapsto \int_0^1 \frac{\tilde{f}(sz)}{1-stz}\,ds$, for $z\in \overline{\D}$, is continuous on $\overline{\D}$ and agrees with $C_tf$ on $\D$. Accordingly, $C_tf\in A(\D)$.
	
Since  $C_t (A(\D)) \subseteq A(\D)$, for $t\in [0,1)$, and $A(\D)$ is a Banach space, by a closed graph argument we can conclude  that $C_t\in \cL(A(\D))$. Indeed, let $f_n\to 0$ in $A(\D)$ and $C_tf_n\to g$ in $A(\D)$ for $n\to\infty$. Since $A(\D)\subseteq H(\D)$ continuously, also $f_n\to 0$ in $H(\D)$ and $C_tf_n\to g$ in $H(\D)$ for $n\to\infty$. The continuity of $C_t\in \cL(H(\D))$ yields that $C_tf_n\to 0$ in $H(\D)$ for $n\to\infty$ and so $g=0$.

 We now compute the norm	of $C_t\in \cL(A(\D))$ for $t\in [0,1)$.

 Consider first $t=0$.
Let $f\in A(\D)$ be fixed. Then $f\in H^\infty$ and so, by  \cite[Proposition 2.3]{ABR-NN}, we can conclude that
	$\|C_0f\|_\infty\leq \|f\|_\infty$.
	Since $A(\D)$ is a closed subspace of $H^\infty$,
	this implies that $\|C_0\|_{A(\D)\to A(\D)}\leq 1$. On the other hand, the function $h_0(z):=1$, for $z\in\overline{\D}$, belongs to $A(\D)$ and satisfies both $\|h_0\|_\infty=1$ and  $C_0h_0=h_0$. It follows  that $\|C_0\|_{A(\D)\to A(\D)}=1$.
	
	Now let $t\in (0,1)$. Again by  \cite[Proposition 2.3]{ABR-NN}, for each $f\in A(\D)\subseteq H^\infty$ we have that
	\begin{equation*}
		\|C_tf\|_\infty\leq \frac{-\log(1-t)}{t}\|f\|_\infty,
	\end{equation*}
which implies that  $\|C_t\|_{A(\D)\to A(\D)}\leq \frac{-\log(1-t)}{t}$. But, $\|C_th_0\|_\infty= \frac{-\log(1-t)}{t}$ and so $\|C_t\|_{A(\D)\to A(\D)}= \frac{-\log(1-t)}{t}$.
\end{proof}

Recall that a linear map $T\colon X\to Y$, with $X$ and $Y$ Banach spaces, is called \textit{compact} if  $T(B_X)$ is a relatively compact set in $Y$, where $B_X$ denotes the closed unit ball  of $X$. It is routine to show that necessarily $T\in \cL(X,Y)$.
 For the spectral theory for compact operators see \cite{Ed,Gr,Ru}, for example.

The proof of the next result, which establishes the compactness of $C_t$ on both $A(\D)$ and $H^\infty$, proceeds  along the lines of the proof of \cite[Proposition 2.7]{ABR-NN}.

\begin{prop}\label{Compact}    For each $t\in [0,1)$, both  of the operators $C_t\colon H^\infty\to H^\infty$ and $C_t\colon A(\D)\to A(\D)$ are  compact.
\end{prop}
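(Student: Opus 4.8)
The plan is to reduce the assertion to the compactness of the Hardy operator $C_0$ by means of a factorization. Using the integral form \eqref{eq.op_p}, for $f\in H^\infty$ and $z\in\D$ one has
\[
(C_tf)(z)=\int_0^1\frac{f(sz)}{1-stz}\,ds=\int_0^1 w_t(sz)\,f(sz)\,ds=\bigl(C_0(w_tf)\bigr)(z),
\]
where $w_t(z):=\frac{1}{1-tz}$. Since $t\in[0,1)$, the function $w_t$ is holomorphic on the disc $B(0,1/t)\supseteq\overline{\D}$, so $w_t\in A(\D)$ with $\|w_t\|_\infty=\frac{1}{1-t}$; as $A(\D)$ and $H^\infty$ are Banach algebras under pointwise multiplication, the multiplication operator $M_{w_t}\colon g\mapsto w_tg$ lies in $\cL(A(\D))$ and in $\cL(H^\infty)$, with norm at most $\frac{1}{1-t}$. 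Hence $C_t=C_0\circ M_{w_t}$ on both spaces, and since the composition of a bounded operator with a compact one is compact, it suffices to prove that $C_0$ is compact on $H^\infty$ and on $A(\D)$.

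For the latter I would appeal to the Arzel\`a--Ascoli theorem, applied to the set $\cK:=\{C_0f:\ f\in H^\infty,\ \|f\|_\infty\le1\}$. From $(C_0f)(z)=\int_0^1 f(sz)\,ds$ one gets at once $\|C_0f\|_\infty\le1$, so $\cK$ is uniformly bounded. For equicontinuity, fix $f$ with $\|f\|_\infty\le1$ and, for $z,z'\in\D$ and $0<\delta<1$, split the difference
\[
(C_0f)(z)-(C_0f)(z')=\int_0^{1-\delta}\bigl(f(sz)-f(sz')\bigr)\,ds+\int_{1-\delta}^1\bigl(f(sz)-f(sz')\bigr)\,ds .
\]
The second term has modulus at most $2\delta$; in the first, $sz$ and $sz'$ lie in $\overline{B(0,s)}$, so the Cauchy estimate $\sup_{|\zeta|\le s}|f'(\zeta)|\le\frac{1}{1-s}$ gives $|f(sz)-f(sz')|\le\frac{s\,|z-z'|}{1-s}$, and hence the first term has modulus at most $|z-z'|\int_0^{1-\delta}\frac{ds}{1-s}=|z-z'|\log\frac{1}{\delta}$. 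Thus $|(C_0f)(z)-(C_0f)(z')|\le 2\delta+|z-z'|\log\frac{1}{\delta}$, a bound independent of $f$, so each $C_0f$ is uniformly continuous on $\D$, extends continuously to $\overline{\D}$ (i.e.\ $C_0f\in A(\D)$), and $\cK$ is uniformly equicontinuous on $\overline{\D}$. Arzel\`a--Ascoli then yields that $\cK$ is relatively compact in $C(\overline{\D})$; since $A(\D)$ sits in $C(\overline{\D})$ as a closed subspace on which the $C(\overline{\D})$-norm equals $\|\cdot\|_\infty$, it follows that $C_0(B_{H^\infty})$ is relatively compact in $H^\infty$, and from $C_0(B_{A(\D)})\subseteq\cK$ that $C_0$ is compact on $A(\D)$ as well. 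Together with the first paragraph this proves the proposition.

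I expect the only delicate point to be the boundary equicontinuity estimate: the Cauchy bound $\frac{s|z-z'|}{1-s}$ blows up as $s\uparrow1$ and must be balanced against the trivial bound $2$ via the cut-off at $s=1-\delta$ in order to obtain a modulus of continuity valid on all of $\overline{\D}$ and uniform over the unit ball; everything else in the argument is routine. Alternatively one can skip the factorization and estimate $(C_tf)(z)-(C_tf)(z')$ directly, using $|1-stz|\ge1-t$ and the Lipschitz continuity of $w_t$ on $\overline{\D}$, but the factorization $C_t=C_0M_{w_t}$ conveniently isolates the one estimate that matters.
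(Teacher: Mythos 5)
Your proof is correct, but it follows a genuinely different route from the paper's. The paper reduces at once to $H^\infty$ (using that $A(\D)$ is closed and $C_t$-invariant) and then proves the claim that any sequence $(f_n)$ in the unit ball of $H^\infty$ with $f_n\to 0$ in $(H(\D),\tau_c)$ satisfies $\|C_tf_n\|_\infty\to 0$; compactness then follows because $H(\D)$ is Fr\'echet--Montel, so norm-bounded sequences have $\tau_c$-convergent subsequences. You instead factor $C_t=C_0\circ M_{w_t}$ with $w_t(z)=\frac{1}{1-tz}\in A(\D)$ (the same factorization the paper only exploits later, in the $H^p$ section), reduce everything to the Hardy operator $C_0$, and prove compactness of $C_0$ by a quantitative equicontinuity bound $|(C_0f)(z)-(C_0f)(z')|\le 2\delta+|z-z'|\log\frac{1}{\delta}$ obtained from the Cauchy estimate $\sup_{|\zeta|\le s}|f'(\zeta)|\le\frac{1}{1-s}$, concluding with Arzel\`a--Ascoli on $\overline{\D}$; amusingly, both arguments hinge on the same cut-off of the $s$-integral at $1-\delta$. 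Your route buys something extra: it shows directly that $C_0$, and hence $C_t=C_0\circ M_{w_t}$, maps all of $H^\infty$ into $A(\D)$, a fact the paper proves only later (the proposition following Theorem \ref{T-w}) by invoking the compactness results of Contreras--Pel\'aez--Pommerenke--R\"atty\"a for the operators $T_g$. What the paper's argument buys in exchange is brevity and flexibility: the ``bounded plus $\tau_c$-null implies norm-null'' criterion combined with Montel's property requires no boundary modulus of continuity and transfers verbatim to the weighted sup-norm spaces treated in the companion paper. All the steps you flag as delicate check out: the balancing of $\frac{s|z-z'|}{1-s}$ against the trivial bound $2$ via the cut-off is exactly what is needed, and the passage from relative compactness in $C(\overline{\D})$ to compactness of $C_0$ on $A(\D)$ and $H^\infty$ uses only that $A(\D)$ is an isometrically closed subspace of both.
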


\begin{proof} Fix $t\in [0,1)$. Since $A(\D)$ is a closed subspace of $H^\infty$ and $C_t(A(\D))\subseteq A(\D)$ (cf. Proposition \ref{Cont_A_D}),
	it suffices to show that $C_t\colon H^\infty\to H^\infty$ is compact.
	
	First we establish the following Claim:
	\begin{itemize}
		\item[($\star$)] Let a sequence $(f_n)_{n\in\N}\subset H^\infty$ satisfy $\|f_n\|_{\infty}\leq 1$ for every $n\in\N$ and $f_n\to 0$ in $(H(\D),\tau_{c})$ for $n\to\infty$. Then $C_tf_n\to 0$ in $H^\infty$.
	\end{itemize}
To prove the Claim, let $(f_n)_{n\in\N}\subset H^\infty$ be a sequence as in ($\star$).
Fix $\varepsilon>0$ and select $\delta\in (0,\beta)$, where $\beta:=\min\{1,\frac{\ve(1-t)}{2}\}$. Since $\overline{B(0,1-\delta)}$ is a compact subset of $\D$, there exists $n_0\in\N$ such that
\[
\max_{|\xi|\leq 1-\delta}|f_n(\xi)|<\delta, \quad n\geq n_0.
\]
Recall that $(C_tf_n)(0)=f_n(0)$ for every $n\in\N$. Therefore, $(C_tf_n)(0)\to 0$ as $n\to\infty$. For $z\in \D\setminus\{0\}$ we have, via \eqref{eq.op_p}, that
\begin{equation*}
	|(C_tf_n)(z)|=\left|\int_0^1\frac{f_n(sz)}{1-stz}ds\right|\leq \int_0^{1-\delta}\frac{|f_n(sz)|}{|1-stz|}ds+\int_{1-\delta}^1\frac{|f_n(sz)|}{|1-stz|}ds.
\end{equation*}
Denote the first (resp., second) summand in the right-side of the previous  inequality by $(A_n)$ (resp., by $(B_n)$).
Since $|1-stz|\geq 1-st |z|\geq \max\{1-s,1-t,1-|z|\}$, for all $s,t\in [0,1)$ and $z\in \D$, it follows, for every $n\geq n_0$, that $\int_0^{1-\delta}|f_n(sz)|\,ds\leq (1-\delta)\max_{|\xi|\leq (1-\delta)}|f_n(\xi)|$ (as $|sz|\leq (1-\delta)$ for all $s\in [0,1-\delta]$) and hence, that
\[
(A_n)\leq  \frac{(1-\delta)}{1-t}\max_{|\xi|\leq 1-\delta}|f_n(\xi)|<\frac{\ve}{2}.
\]
On the other hand, for every $n\geq n_0$, observe that
\[
(B_n)=\int_{1-\delta}^1 \frac{|f_n(sz)|}{|1-stz|}\,ds\leq\int_{1-\delta}^1\frac{\|f_n\|_{\infty}}{1-t}\,ds\leq \frac{\delta}{1-t}<\frac{\ve}{2}.
\]
It follows that $\|C_tf_n\|_{\infty}<\ve$ for every $n\geq n_0$. That is, $C_tf_n\to 0$ in $H^\infty$ for $n\to\infty$ and so ($\star$) is proved.

Concerning the compactness of $C_t\in\cL(H^\infty)$, let $(f_n)_{n\in\N}\subset H^\infty$ be any bounded sequence. There is no loss of generality in assuming that $\|f_n\|_{\infty}\leq 1$ for all $n\in\N$. To establish the compactness of $C_t\in\cL(H^\infty)$  we need to show that $(C_tf_n)_{n\in\N}$ has a convergent subsequence in $H^\infty$. This follows routinely from the fact that $(f_n)_{n\in\N}$ is also bounded in the Fr\'echet-Montel space  $H(\D)$, as $H^\infty\su H(\D)$ continuously, together with condition ($\star$).
%
\end{proof}

Given a Fr\'echet space $X$ and $T\in \cL(X)$, the resolvent set $\rho(T;X)$ of $T$ consists of all $\lambda\in\C$ such that the inverse operator  $R(\lambda,T):=(\lambda I-T)^{-1}$ exists in $\cL(X)$. The set $\sigma(T;X):=\C\setminus \rho(T;X)$ is called the \textit{spectrum} of $T$. The \textit{point spectrum}  $\sigma_{pt}(T;X)$ of $T$ consists of all $\lambda\in\C$ (also called an eigenvalue of $T$) such that $(\lambda I-T)$ is not injective. In the event that $X$ is a Banach space, the residual spectrum $\sigma_r(T;X)$ (resp. continuous spectrum $\sigma_c(T;X)$) of $T$ consists of all $\lambda\in\C$ such that $\lambda I-T$ is injective and ${\rm Im}(\lambda I-T)$ is not dense (resp. is proper and dense) in $X$. This provides the pairwise disjoint decomposition
\[
\sigma(T;X)=\sigma_{pt}(T;X)\cup \sigma_r(T;X)\cup \sigma_c(T;X).
\]

\begin{prop}\label{Spectrum}  For each $t\in [0,1)$ the  spectra of $C_t\in \cL(H^\infty)$ and of $C_t\in \cL(A(\D))$ are given by
	\begin{equation}\label{Sp-pt}
		\sigma_{pt}(C_t;H^\infty)=\sigma_{pt}(C_t;A(\D))=\left\{\frac{1}{m+1}\,: m\in\N_0\right\},
	\end{equation}
and
\begin{equation}\label{Sp}
		\sigma(C_t;H^\infty)=\sigma(C_t;A(\D))=\left\{\frac{1}{m+1}\,: m\in\N_0\right\}\cup\{0\}.
\end{equation}
\end{prop}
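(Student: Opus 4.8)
The plan is to compute $\sigma_{pt}(C_t;\cdot)$ first, by reducing the eigenvalue equation to a first order linear ODE, and then to read off the full spectrum $\sigma(C_t;\cdot)$ from the compactness of $C_t$ (Proposition~\ref{Compact}). Since $A(\D)\su H^\infty\su H(\D)$ and in all three spaces $C_t$ acts by the single formula \eqref{eq.formula-int}, any eigenfunction of $C_t$ in $A(\D)$ or in $H^\infty$ is an eigenfunction of $C_t$ in $H(\D)$ with the same eigenvalue; so I would first analyse $C_tf=\lambda f$ for $f\in H(\D)$, $f\not\equiv0$. Multiplying \eqref{eq.formula-int} through by $z$ gives $\int_0^z\frac{f(\xi)}{1-t\xi}\,d\xi=\lambda z f(z)$ on all of $\D$, and differentiating this identity of holomorphic functions turns it into
\begin{equation}\label{eq.eig-ode}
\lambda z(1-tz)f'(z)=\bigl(1-\lambda+\lambda tz\bigr)f(z),\qquad z\in\D .
\end{equation}
Comparing the lowest order Taylor coefficients on the two sides of \eqref{eq.eig-ode}: if $f$ vanishes to order $k\in\N_0$ at $0$, say $f(z)=cz^k+O(z^{k+1})$ with $c\neq0$, then $\lambda kc=(1-\lambda)c$, i.e. $\lambda(k+1)=1$. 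Hence the only possible eigenvalues are $\lambda=\tfrac1{m+1}$, $m\in\N_0$, and in particular $0\notin\sigma_{pt}(C_t;H(\D))$.

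Conversely, for $\lambda=\tfrac1{m+1}$ I would exhibit the eigenfunction $g_m(z):=z^m(1-tz)^{-(m+1)}$. A one-line computation using $\frac{d}{d\xi}\bigl[\xi^{m+1}(1-t\xi)^{-(m+1)}\bigr]=(m+1)\xi^m(1-t\xi)^{-(m+2)}$ together with \eqref{eq.formula-int} shows $\int_0^z\frac{g_m(\xi)}{1-t\xi}\,d\xi=\frac1{m+1}z^{m+1}(1-tz)^{-(m+1)}$, hence $C_tg_m=\frac1{m+1}g_m$. The one point where the standing hypothesis $t\in[0,1)$ is genuinely used is the verification that $g_m$ actually belongs to $A(\D)$: it is a rational function whose only pole sits at $z=1/t$, which lies outside $\overline{\D}$ (for $t=0$ it is simply the polynomial $z^m$), so $g_m$ extends holomorphically past $\overline{\D}$ and therefore $g_m\in A(\D)\su H^\infty$ with $g_m\not\equiv0$. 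This yields $\{\tfrac1{m+1}:m\in\N_0\}\su\sigma_{pt}(C_t;A(\D))\su\sigma_{pt}(C_t;H^\infty)\su\sigma_{pt}(C_t;H(\D))$, and combining with the reverse inclusion from the previous paragraph gives the two equalities in \eqref{Sp-pt}.

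Finally, for \eqref{Sp} I would invoke Proposition~\ref{Compact}: $C_t$ is compact on each of the infinite dimensional Banach spaces $H^\infty$ and $A(\D)$, so by the Riesz--Schauder theory of compact operators, every nonzero point of $\sigma(C_t;X)$ is an eigenvalue, i.e. $\sigma(C_t;X)\s\{0\}=\sigma_{pt}(C_t;X)\s\{0\}$, while $0\in\sigma(C_t;X)$ because $\dim X=\infty$. Since $0\notin\sigma_{pt}(C_t;X)$ by the above, this gives $\sigma(C_t;X)=\sigma_{pt}(C_t;X)\cup\{0\}=\{\tfrac1{m+1}:m\in\N_0\}\cup\{0\}$ for $X\in\{H^\infty,A(\D)\}$, which is \eqref{Sp}. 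I do not expect a genuine obstacle here: the ODE reduction and the explicit eigenfunctions $g_m$ are elementary, and the passage from the point spectrum to the full spectrum is standard compact operator theory; the only subtlety worth flagging is precisely the membership $g_m\in A(\D)$, which breaks down exactly at the excluded value $t=1$.
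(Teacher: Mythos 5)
Your proof is correct, but it takes a genuinely different route to the point spectrum than the paper. The paper identifies $\sigma_{pt}$ by passing to the coefficient space: it cites the known result of Curbera--Ricker (\cite[Lemma 3.6]{CR4}) that $\sigma_{pt}(C_t^\omega;\omega)=\{\frac{1}{m+1}: m\in\N_0\}$ with one-dimensional eigenspaces spanned by vectors $x^{[m]}\in\ell^1$, uses the continuous Taylor-coefficient embedding $H(\D)\hookrightarrow\omega$ to get the inclusion $\sigma_{pt}(C_t;A(\D))\subseteq\sigma_{pt}(C_t;H^\infty)\subseteq\{\frac{1}{m+1}\}$, and then gets the reverse inclusion because the $\ell^1$ summability of $x^{[m]}$ forces the associated power series $g_m$ to lie in $A(\D)$. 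You instead argue entirely inside $H(\D)$: differentiating the eigenvalue equation coming from \eqref{eq.formula-int} gives the first order ODE $\lambda z(1-tz)f'(z)=(1-\lambda+\lambda tz)f(z)$, whose lowest-order Taylor coefficient comparison at $z=0$ forces $\lambda=\frac{1}{k+1}$ (and excludes $\lambda=0$), and you exhibit the explicit eigenfunctions $g_m(z)=z^m(1-tz)^{-(m+1)}$, which are rational with pole at $1/t\notin\overline{\D}$, hence in $A(\D)$; both computations check out (your antiderivative identity is correct, and the case $m=0$ recovers the function $h_t(z)=(1-tz)^{-1}$ that the paper uses later in Proposition \ref{Dyn-Hv}). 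The last step --- compactness from Proposition \ref{Compact} plus Riesz--Schauder theory giving $\sigma=\sigma_{pt}\cup\{0\}$ on the infinite-dimensional spaces $H^\infty$ and $A(\D)$ --- is the same in both arguments. What your approach buys is self-containedness and closed-form eigenfunctions (and, if one solves the ODE, one-dimensionality of the eigenspaces for free); what the paper's approach buys is brevity and consistency with its sequence-space framework, at the cost of relying on the external result in \cite{CR4}.
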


\begin{proof} Let $t\in [0,1)$ be fixed.
	By \cite[Lemma 3.6]{CR4}  the point spectrum of the  operator $C_t^\omega\in \cL(\omega)$ is given by $\sigma_{pt}(C_t^\omega;\omega)=\{\frac{1}{m+1}\,:\, m\in\N_0\}$ and, for each $m\in\N_0$,   the corresponding eigenspace $\Ker(\frac{1}{m+1}I-C_t^\omega)$ is 1-dimensional and is generated by an eigenvector $x^{[m]}=(x_n^{[m]})_{n\in\N_0}\in \ell^1$.
	Since $A(\D)\subseteq H^\infty\subseteq H(\D)$ with continuous inclusions and the map $\Phi\colon H(\D)\to  \omega$ given by $f\mapsto (\hat{f}(n))_{n\in\N_0}$, is a continuous embedding (see
	Section 1 of \cite{ABR-NN}), we have that $\sigma_{pt}(C_t;A(\D))\subseteq \sigma_{pt}(C_t;H^\infty)\subseteq \{\frac{1}{m+1}\,:\, m\in\N_0\}$. Indeed, let $f\in H(\D)\setminus\{0\}$ and $\lambda\in \C$ satisfy $C_tf=\lambda f$. Then $\lambda f(z)=\sum_{n=0}^\infty \widehat{(\lambda f)}(n)z^n=\sum_{n=0}^\infty\lambda  \hat{f}(n)z^n$ and, by \eqref{eq.rapp-serie}, we have that $(C_tf)(z)=\sum_{n=0}^\infty (C_t^\omega \hat{f})_nz^n$. It follows that $C_t^\omega \hat{f}=\lambda \hat{f}$ in $\omega$ with $\hat{f}\not=0$ and so $\lambda\in \sigma_{pt}(C_t^\omega;\omega)=\{\frac{1}{m+1}\ :\ m\in\N_0\}$.
	
	To conclude the proof, it remains to show that $\{\frac{1}{m+1}\,:\, m\in\N_0\}\subseteq \sigma_{pt}(C_t;A(\D))$. To establish this recall, for each $m\in\N_0$, that the eigenvector $x^{[m]}$ corresponding to $\frac{1}{m+1}$ belongs to $\ell^1$ and hence, the function $g_m(z):=\sum_{n=0}^\infty x^{[m]}_nz^n$ belongs to $A(\D)$. 
	 Then \eqref{eq.formula-int} and \eqref{eq.rapp-serie} imply, for each $z\in\D$, that
	\begin{equation}
	(C_tg_m)(z)=\sum_{n=0}^{\infty}(C_t^\omega x^{[m]})_nz^n=\frac{1}{m+1}\sum_{n=0}^\infty x^{[m]}_nz^n=\frac{1}{m+1}g_m(z).
	\end{equation}
	Thus $g_m$ is an eigenvector of $C_t\in \cL(A(\D))$ corresponding to the eigenvalue $\frac{1}{m+1}$.

	Finally,  $\sigma(C_t;A(\D))=\sigma(C_t;H^\infty)=\{\frac{1}{m+1}\,:\, m\in\N_0\}\cup\{0\}$ follows from the fact that $C_t$ is a  compact operator  on both $A(\D)$ and $H^\infty$.
\end{proof}

An operator $T\in \cL(X)$, with $X$ a Banach space, is called \textit{power bounded} if  $\sup_{n\in\N_0}\|T^n\|_{X\to X}<\infty$. Given $T\in \cL(X)$, define its Cesàro means by
\[
T_{[n]}:=\frac{1}{n}\sum_{m=1}^nT^m,\quad n\in\N.
\]
 Then  $T$ is said to be \textit{mean ergodic} (resp., \textit{uniformly mean ergodic}) if $(T_{[n]})_{n\in\N}$ is a convergent sequence in $\cL_s(X)$ (resp., a convergent sequence for the operator norm in  $\cL(X)$). It is routine to check that $\frac{T^n}{n}=T_{[n]}-\frac{n-1}{n}T_{[n-1]}$, for $n\geq 2$, and hence,  $\tau_s$-$\lim_{n\to\infty}\frac{T^n}{n}=0$ whenever $T$ is mean ergodic.  An operator  $T\in \cL(X)$ is said to be \textit{supercyclic} if, for some $z\in X$, the projective orbit $\{\lambda T^nz\ :\ \lambda\in\C,\ n\in\N_0\}$ is dense in $X$. Since the closure of the linear span of a projective orbit is separable, whenever any supercyclic operator in $\cL(X)$ exists, then $X$ is necessarily  separable.

For the linear dynamics of $T$ we refer to \cite{B-M,G-P} and for mean ergodic operators to \cite{K}, for example.

As a consequence of
 the previous proposition, combined with Theorem \ref{Th-ABR}, we have the following result.

\begin{prop}\label{Dyn-Hv} For each $t\in [0,1)$ both of  the operators $C_t\in \cL(H^\infty)$ and  $C_t\in \cL(A(\D))$ are power bounded, uniformly mean ergodic but, fail to be supercyclic.
\end{prop}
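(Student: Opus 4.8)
The plan is to verify the hypotheses of Theorem \ref{Th-ABR} for $C_t$ acting on each of the two Banach spaces $X\in\{H^\infty,A(\D)\}$, and then deduce both power boundedness and uniform mean ergodicity from that theorem; the failure of supercyclicity is handled separately. First I would recall from Proposition \ref{Compact} that $C_t\colon X\to X$ is compact, and from Proposition \ref{Spectrum} that
\[
\sigma(C_t;X)=\left\{\tfrac{1}{m+1}\,:\,m\in\N_0\right\}\cup\{0\}.
\]
In particular $\sigma(C_t;X)\subseteq\overline{\D}$ and $\sigma(C_t;X)\cap\T=\{1\}$, since the only point of $\{\tfrac{1}{m+1}:m\in\N_0\}\cup\{0\}$ lying on the unit circle is $1$ (attained at $m=0$). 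So two of the three hypotheses of Theorem \ref{Th-ABR} are immediate.

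The remaining hypothesis is $\Ker(I-C_t)\cap{\rm Im}(I-C_t)=\{0\}$. Since $1=\tfrac{1}{m+1}$ corresponds to $m=0$, the eigenspace $\Ker(I-C_t)$ is one–dimensional, spanned by the eigenfunction $g_0$ described in the proof of Proposition \ref{Spectrum} (the function whose Taylor coefficients are the $\ell^1$–eigenvector $x^{[0]}$ of $C_t^\omega$ for the eigenvalue $1$); concretely $g_0\in A(\D)\subseteq H^\infty$ and $C_tg_0=g_0$. I would then argue that $g_0\notin{\rm Im}(I-C_t)$ in $X$. One clean way: pass to the coefficient space via the continuous embedding $\Phi\colon H(\D)\to\omega$, $f\mapsto\hat f$, which intertwines $C_t$ and $C_t^\omega$. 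If $g_0=(I-C_t)f$ for some $f\in X\subseteq H(\D)$, then $\hat{g}_0=(I-C_t^\omega)\hat f$ in $\omega$, i.e. $x^{[0]}\in{\rm Im}(I-C_t^\omega)$ in $\omega$. But $x^{[0]}\in\Ker(I-C_t^\omega)$, so this would force $x^{[0]}\in\Ker(I-C_t^\omega)\cap{\rm Im}(I-C_t^\omega)$ in $\omega$; since the eigenvalue $1$ of the (upper triangular, in $\omega$) operator $C_t^\omega$ has a one–dimensional eigenspace and—by the structure of $C_t^\omega$ on $\omega$—the generalized eigenspace is also one–dimensional, one has $\Ker(I-C_t^\omega)\cap{\rm Im}(I-C_t^\omega)=\{0\}$, a contradiction since $x^{[0]}\neq0$. (Alternatively, and perhaps more in the spirit of the paper, apply Theorem \ref{Th-ABR} already known to hold for $C_t^\omega$, or compute directly that the coefficient recursion $x_n-(C_t^\omega x)_n=(x^{[0]})_n$ has no solution in $\omega$.) This step is where I expect the only real work to be, and I would double–check that the triangular structure of $C_t^\omega$ indeed gives a one–dimensional generalized eigenspace at $1$; everything else is bookkeeping.

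Having verified all three hypotheses, Theorem \ref{Th-ABR} yields that $C_t\in\cL(X)$ is power bounded and uniformly mean ergodic, for $X=H^\infty$ and for $X=A(\D)$. Finally, for the failure of supercyclicity I would invoke the standard fact that a supercyclic operator on a Banach space cannot have more than one point in its point spectrum (indeed the point spectrum of a supercyclic operator contains at most one value); here $\sigma_{pt}(C_t;X)=\{\tfrac{1}{m+1}:m\in\N_0\}$ is infinite by \eqref{Sp-pt}, so $C_t$ is not supercyclic on either space. (As an even cruder alternative, one may note that a power bounded operator with $\tfrac12\in\sigma_{pt}$, say, cannot be supercyclic, since the projective orbit of any eigenvector of an eigenvalue of modulus $<1$ under a power bounded operator is not dense.) This completes the proof.
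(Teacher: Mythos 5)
Your verification of the hypotheses of Theorem \ref{Th-ABR} is correct and is essentially the paper's own route: compactness (Proposition \ref{Compact}), the spectrum from Proposition \ref{Spectrum}, and then $\Ker(I-C_t)\cap{\rm Im}(I-C_t)=\{0\}$. For that last condition your detour through the coefficient space $\omega$ does work (and your parenthetical ``direct recursion'' remark is the cleanest version of it: the $n=0$ equation reads $y_0-y_0=1$), but it is really the same mechanism the paper uses more directly in the function space: since $(C_tf)(0)=f(0)$, every element of ${\rm Im}(I-C_t)$ vanishes at $0$, while $\Ker(I-C_t)={\rm span}\{h_t\}$ with $h_t(z)=(1-tz)^{-1}$ and $h_t(0)=1$; no discussion of generalized eigenspaces of $C_t^\omega$ is needed.

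The genuine gap is in the non-supercyclicity argument. The ``standard fact'' you invoke --- that a supercyclic operator has at most one eigenvalue --- is false as stated: it concerns the \emph{adjoint}, not the operator itself. For instance $T=2B$ on $\ell^2$ ($B$ the backward shift) is hypercyclic, hence supercyclic, yet $\sigma_{pt}(T;\ell^2)$ is the whole open disc of radius $2$. The correct statement, \cite[Proposition 1.26]{B-M} (the one the paper uses), is that $\sigma_{pt}(T';X')$ contains at most one point when $T$ is supercyclic. Your argument can be repaired with one extra step that is available to you: since $C_t$ is compact, its transpose $C_t'$ has the same non-zero eigenvalues as $C_t$, so by \eqref{Sp-pt} the set $\sigma_{pt}(C_t';X')$ is infinite and \cite[Proposition 1.26]{B-M} applies; this is exactly the paper's argument on the separable space $A(\D)$, while on $H^\infty$ the paper simply observes that non-separability already excludes supercyclic operators. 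Your ``cruder alternative'' does not work either: supercyclicity asks for \emph{some} vector with dense projective orbit, so showing that the projective orbit of an eigenvector is not dense proves nothing about the operator.
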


\begin{proof} Fix $t\in [0,1)$.  Since $H^\infty$ is non-separable,  $C_t\in \cL(H^\infty)$ cannot be supercyclic.
	
	 The operator $C_t$ is compact on  both  $H^\infty$ and on $A(\D)$ (cf. Proposition \ref{Compact}). Therefore, the compact transpose operators $C_t'\in \cL((H^\infty)')$ and $C'_t\in \cL((A(\D))')$ have the same non-zero eigenvalues as $C_t$ (see, e.g., \cite[Theorem 9.10-2(2)]{Ed}). In view of Proposition \ref{Spectrum} it follows that $\sigma_{pt}(C_t';(H^\infty)')=\sigma_{pt}(C_t';(A(\D))')=\{\frac{1}{m+1}\,:\, m\in\N_0\}$. We can apply \cite[Proposition 1.26]{B-M} to conclude that $C_t$ is not supercyclic on  the separable space $A(\D)$.
	
	By Proposition \ref{Spectrum} and its proof (as $x^{[0]}=(t^n)_{n\in\N_0}$) we have that $\Ker (I-C_t)={\rm span}\{h_t\}$, with $h_t(z):=\sum_{n=0}^\infty t^nz^n=\frac{1}{1-tz}$, for $z\in B(0,\frac{1}{t})$ (with $\frac{1}{t}>1$). Note that $\overline{\D}\subseteq  B(0,\frac{1}{t})$. The function $h_t\in A(\D)$ is the eigenvector denoted by $g_0$ in the proof of Proposition \ref{Spectrum} corresponding to the eigenvalue $1$. On the other hand, ${\rm Im}(I-C_t)$ is a closed subspace of $H^\infty$ (resp., of $A(\D)$), as $C_t$ is compact in $H^\infty$ (resp., in $A(\D)$)). Also,  ${\rm Im}(I-C_t)\subseteq \{g\in H^\infty\,:\, g(0)=0\}$ (resp., ${\rm Im}(I-C_t)\subseteq \{g\in A(\D)\,:\, g(0)=0\}$), because $(C_tf)(0)=f(0)$ for each $f\in H^\infty$ (resp., for each $f\in A(\D)$). Moreover,  \cite[Theorem 9.10.1]{Ed} implies that ${\rm codim}\,{\rm Im}(I-C_t)={\rm dim}\Ker (I-C_t)=1$ regarded as subspaces of $H^\infty$ (resp., of $A(\D)$). Accordingly, both ${\rm Im}(I-C_t)$ and $\{g\in H^\infty\, :\, g(0)=0\}=\Ker (\delta_0)$ are hyperplanes of $H^\infty$ (resp., of $A(\D)$). 
	It follows that necessarily ${\rm Im}(I-C_t)=\{g\in H^\infty\, :\, g(0)=0\}$ (resp., ${\rm Im}(I-C_t)=\{g\in A(\D)\, :\, g(0)=0\}$).
	
	Let $u\in {\rm Im}(I-C_t)\cap\Ker (I-C_t)$. Then $u(0)=0$ and there exists $\lambda\in \C$ such that $u=\lambda h_t$. This yields that $0=u(0)=\lambda h_t(0)=\lambda$. Hence, $u=0$. So, ${\rm Im}(I-C_t)\cap\Ker (I-C_t)=\{0\}$.
	
	Proposition \ref{Spectrum} implies that $1\in \sigma(C_t; H^\infty)=\sigma(C_t; A(\D))=\{\frac{1}{m+1}\,;\, m\in\N_0\}\cup\{0\}$. Consequently, for $\delta=\frac{1}{2}$, all the assumptions of Theorem \ref{Th-ABR} (see also Remark \ref{R.12}) are satisfied. So, we can conclude that $C_t$ is power bounded and uniformly mean ergodic   both on $H^\infty$ and on $A(\D)$.
\end{proof}

\section{Continuity, Compactness and Spectrum of $C_t$ acting in $H^p$ }

In this section we investigate various properties of the operators $C_t$ when they act in the family of Hardy spaces $H^p$, $1\leq p<\infty$, as well as some related Volterra type operators. We recall a known fact which follows easily from Jensen's inequality  for $\R$-valued functions, \cite{Ru-0}.

\begin{lemma}\label{L.3.1} Let $h\colon [0,1]\to\C$ be an integrable function. For each $p\geq 1$ we have
	\[
	\left|\int_0^1h(s)\,ds \right|^p\leq \int_0^1 |h(s)|^pds.
	\]
\end{lemma}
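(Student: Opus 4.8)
The plan is to deduce this directly from Jensen's inequality applied to the convex function $\varphi(x) = x^p$ on $[0,\infty)$, after reducing from the complex-valued $h$ to the nonnegative real-valued $|h|$. First I would observe that $\left|\int_0^1 h(s)\,ds\right| \le \int_0^1 |h(s)|\,ds$ by the triangle inequality for integrals, so it suffices to prove $\left(\int_0^1 |h(s)|\,ds\right)^p \le \int_0^1 |h(s)|^p\,ds$; here the function $g := |h|$ is a nonnegative integrable real-valued function on $[0,1]$.

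Next I would invoke Jensen's inequality for real-valued functions in the form found in \cite{Ru-0}: if $\mu$ is a probability measure on a space $X$, $g \in L^1(\mu)$ takes values in an interval $(a,b)$, and $\varphi$ is convex on $(a,b)$, then $\varphi\!\left(\int_X g\,d\mu\right) \le \int_X (\varphi\circ g)\,d\mu$. I would apply this with $X = [0,1]$, $\mu$ equal to Lebesgue measure restricted to $[0,1]$ (which is a probability measure since $[0,1]$ has length $1$), $g = |h| \ge 0$, and $\varphi(x) = x^p$, which is convex on $[0,\infty)$ for every $p \ge 1$. This yields $\left(\int_0^1 |h(s)|\,ds\right)^p \le \int_0^1 |h(s)|^p\,ds$, and combining with the triangle inequality step gives the claim.

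There is essentially no obstacle here; the only mild points of care are (i) checking that Jensen's inequality as stated in \cite{Ru-0} is applicable when $\varphi$ is convex merely on the closed half-line $[0,\infty)$ rather than an open interval — this is standard and can be handled either by noting $x^p$ is continuous and convex on all of $\R$ when restricted appropriately, or by passing to a limit, and (ii) the (harmless) possibility that $\int_0^1 |h(s)|^p\,ds = +\infty$, in which case the inequality is trivially true. Since the normalization constant is exactly $1$, no factor of the interval length intervenes, and the statement follows cleanly.
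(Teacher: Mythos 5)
Your argument is correct and matches the paper's approach: the paper simply notes that the lemma ``follows easily from Jensen's inequality for $\R$-valued functions'' (citing \cite{Ru-0}), which is exactly your reduction via the triangle inequality to $|h|$ followed by Jensen's inequality for the convex function $x\mapsto x^p$ on the probability space $[0,1]$ with Lebesgue measure. Nothing further is needed.
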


It follows from \cite[Theorem 1]{AleSi1}, by substituting for $g$ there  the particular function $g(z)=z$, for $z\in\D$, that the Hardy operator $C_0$ is bounded in $H^p$ for all $1\leq p<\infty$. We include a simple and direct proof of this fact based on Lemma \ref{L.3.1}.

\begin{prop}\label{Cont_C0_Hp} Let $1\leq p<\infty$.  The Hardy operator $C_0\colon H^p\to H^p$ is continuous with operator norm $\|C_0\|_{H^p\to H^p}=1$.
\end{prop}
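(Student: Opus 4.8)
The plan is to establish the norm bound $\|C_0 f\|_p \leq \|f\|_p$ directly from the integral representation, and then show it is attained at the constant function. Recall from \eqref{eq.op_p} that for $f \in H(\D)$ we have $(C_0 f)(z) = \int_0^1 f(sz)\,ds$ for $z \in \D$ (the value at $z = 0$ being $f(0)$, consistent with the integral). First I would fix $r \in [0,1)$ and estimate $M_p(r, C_0 f)^p$. By Lemma \ref{L.3.1} applied to the function $s \mapsto f(sre^{i\theta})$ on $[0,1]$, we get pointwise
\[
|(C_0 f)(re^{i\theta})|^p = \left|\int_0^1 f(sre^{i\theta})\,ds\right|^p \leq \int_0^1 |f(sre^{i\theta})|^p\,ds.
\]
Integrating over $\theta \in [0, 2\pi]$, dividing by $2\pi$, and using Fubini's theorem to interchange the $s$- and $\theta$-integrations yields
\[
M_p(r, C_0 f)^p \leq \int_0^1 \left(\frac{1}{2\pi}\int_0^{2\pi}|f(sre^{i\theta})|^p\,d\theta\right)ds = \int_0^1 M_p(sr, f)^p\,ds.
\]

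Since $0 \leq sr \leq r < 1$ and $M_p(\cdot, f)$ is increasing on $[0,1)$ by \eqref{eq.disnorm}, each term $M_p(sr,f)^p \leq M_p(r,f)^p \leq \|f\|_p^p$, so the last integral is at most $\|f\|_p^p$. Taking the supremum over $r \in [0,1)$ gives $\|C_0 f\|_p \leq \|f\|_p$, which already shows $C_0 f \in H^p$ and $\|C_0\|_{H^p \to H^p} \leq 1$. (Continuity of $C_0$ on $H^p$ follows, or alternatively one can invoke the closed graph theorem exactly as in the proof of Proposition \ref{Cont_A_D}, using $H^p \subseteq H(\D)$ continuously together with $C_t \in \cL(H(\D))$.) For the reverse inequality, take the constant function $h_0(z) := 1$; then $C_0 h_0 = h_0$ and $\|h_0\|_p = 1$, so $\|C_0\|_{H^p \to H^p} \geq 1$. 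Combining the two bounds gives $\|C_0\|_{H^p \to H^p} = 1$.

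The only point requiring a small amount of care is the application of Fubini's theorem: the integrand $(s,\theta) \mapsto |f(sre^{i\theta})|^p$ is continuous and nonnegative on the compact set $[0,1] \times [0,2\pi]$ for fixed $r < 1$ (since $f$ is holomorphic, hence continuous, on $\overline{B(0,r)} \subset \D$), so Tonelli's theorem applies with no integrability hypotheses needed. Thus there is no genuine obstacle here; the statement is essentially a clean corollary of Lemma \ref{L.3.1} together with the monotonicity of the integral means $M_p(\cdot, f)$.
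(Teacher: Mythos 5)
Your proposal is correct and follows essentially the same route as the paper: the pointwise bound via Lemma \ref{L.3.1}, interchange of the $s$- and $\theta$-integrations, monotonicity of $M_p(\cdot,f)$ to get $\|C_0f\|_p\leq\|f\|_p$, and the constant function $\mathbbm{1}$ to show the norm equals $1$. The extra remarks (Tonelli justification, optional closed graph argument) are harmless but not needed.
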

\begin{proof} Recall from \eqref{eq.op_p} that
 the Hardy operator $C_0\colon H(\D)\to H(\D)$ is  given by $(C_0f)(0)=f(0)$ for $z=0$ and, for $z\in\D\setminus\{0\}$, by
	\begin{equation}\label{eq.delta}
	(C_0f)(z)=\frac{1}{z}\int_0^z f(\xi)\,d\xi=\int_0^1 f(sz)\,ds .
	\end{equation}
	For $z\in\D$ fixed,  $s\mapsto f(sz)$ is continuous on $[0,1]$ and hence, integrable.
	For every $f\in H^p$ and $r\in [0,1)$,  Lemma \ref{L.3.1} and Fubini's theorem imply that
	\begin{align*}
	M_p(r,C_0f)^p&=\frac{1}{2\pi}\int_0^{2\pi}|(C_0f)(re^{i\theta})|^p\,d\theta=\frac{1}{2\pi}\int_0^{2\pi}\left|\int_0^1f(sre^{i\theta})\,ds\right|^pd\theta\\
	&\leq \frac{1}{2\pi}\int_0^{2\pi}\left(\int_0^1|f(sre^{i\theta})|^p\,ds\right)\,d\theta=\int_0^1\left(\frac{1}{2\pi}\int_0^{2\pi}|f(sre^{i\theta})|^p\,d\theta\right)ds\\
	&= \int_0^1M_p(sr,f)^pds\leq M_p(r,f)^p\leq \|f\|^p_p.
	\end{align*}
This implies that $C_0f\in H^p$ and $\|C_0f\|_p\leq \|f\|_p$ for every $f\in H^p$. So, $C_0\in \cL(H^p)$ and $\|C_0\|_{H^p\to H^p}\leq 1$. Since $h_0:=\mathbbm{1}\in H^p$ with $\|h_0\|_p=\|\mathbbm{1}\|_p=1$ and $C_0h_0=\mathbbm{1}=h_0$, it follows that $\|C_0\|_{H^p\to H^p}=1$.
\end{proof}

\begin{prop}\label{Cont_Hp} Let $1\leq p<\infty$. For each $t\in (0,1)$ the operator $C_t\colon H^p\to H^p$ is continuous and its operator norm satisfies
	\begin{equation}\label{NO}
	1\leq \|C_t\|_{H^p\to H^p}\leq \frac{1}{1-t},\quad t\in (0,1).
	\end{equation}
Actually, for $p=1$, it is the case that
\begin{equation}\label{norn-Hp1}
	 \|C_t\|_{H^1\to H^1}\leq \frac{-\log (1-t)}{t},
\end{equation}
and for $1 < p < \infty,$ it is the case that
\begin{equation}\label{norn-Hp}
		 \|C_t\|_{H^p\to H^p}\leq \left[\frac{1}{t(p-1)}\left(\frac{1}{(1-t)^{p-1}}-1\right)\right]^{1/p}.
	\end{equation}
\end{prop}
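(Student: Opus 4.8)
The plan is to run, in a weighted form, the argument used for Proposition~\ref{Cont_C0_Hp}. Fix $t\in(0,1)$, $1\le p<\infty$, and $f\in H^p$. For every $z\in\D$ the function $s\mapsto f(sz)/(1-stz)$ is continuous on $[0,1]$, so Lemma~\ref{L.3.1} applied to it, together with the integral representation \eqref{eq.op_p}, gives the pointwise inequality
\[
|(C_tf)(z)|^{p}=\left|\int_0^1\frac{f(sz)}{1-stz}\,ds\right|^{p}\le\int_0^1\frac{|f(sz)|^{p}}{|1-stz|^{p}}\,ds,\qquad z\in\D .
\]
The only kernel estimate needed is the elementary one $|1-stz|\ge 1-st|z|\ge 1-st>0$, valid because $t<1$; it holds for all $p\ge 1$, so no case distinction is needed at this stage.

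Next I would put $z=re^{i\theta}$ with $r\in[0,1)$, so that $|1-stre^{i\theta}|\ge 1-str\ge 1-st$, integrate the previous inequality over $\theta\in[0,2\pi]$, divide by $2\pi$, and interchange the order of integration by Fubini's theorem, exactly as in Proposition~\ref{Cont_C0_Hp}. Using $M_p(sr,f)\le\|f\|_p$ (cf.~\eqref{eq.disnorm}) and $1-str\ge 1-st$ (because $r\le 1$) this yields
\[
M_p(r,C_tf)^{p}\le\int_0^1\frac{M_p(sr,f)^{p}}{(1-str)^{p}}\,ds\le\|f\|_p^{p}\int_0^1\frac{ds}{(1-st)^{p}} .
\]
It then only remains to evaluate the one-dimensional integral $\int_0^1(1-st)^{-p}\,ds$: for $p=1$ it equals $\frac{-\log(1-t)}{t}$, and for $p>1$ it equals $\frac{1}{t(p-1)}\left(\frac{1}{(1-t)^{p-1}}-1\right)$. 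Taking the supremum over $r\in[0,1)$ shows $C_tf\in H^p$ with $\|C_tf\|_p$ bounded by $\|f\|_p$ times the $p$-th root of that integral, so $C_t\colon H^p\to H^p$ is well defined and continuous and satisfies \eqref{norn-Hp1} when $p=1$ and \eqref{norn-Hp} when $1<p<\infty$. For the remaining assertions in \eqref{NO}: since $(1-st)^{-p}\le(1-t)^{-p}$ on $[0,1]$, the last displayed integral is at most $(1-t)^{-p}$, whence $\|C_t\|_{H^p\to H^p}\le\frac{1}{1-t}$; and testing on $h_0:=\mathbbm{1}\in H^p$, which has $\|h_0\|_p=1$ and $(C_th_0)(0)=h_0(0)=1$, gives $\|C_th_0\|_p\ge M_p(0,C_th_0)=1$ by \eqref{eq.disnorm}, hence $\|C_t\|_{H^p\to H^p}\ge 1$.

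I do not anticipate a genuine obstacle: this is essentially a bookkeeping variant of the proof of Proposition~\ref{Cont_C0_Hp}. The single point to get right is to apply Lemma~\ref{L.3.1} \emph{directly} to the kernel $f(sz)/(1-stz)$, which retains the full weight $(1-st)^{-p}$ and thereby reduces the sharp bounds \eqref{norn-Hp1} and \eqref{norn-Hp} to the elementary integral $\int_0^1(1-st)^{-p}\,ds$, whose closed form is the only place where the cases $p=1$ and $p>1$ part company.
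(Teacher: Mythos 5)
Your proof is correct, and for the sharp estimates \eqref{norn-Hp1} and \eqref{norn-Hp} it is exactly the paper's computation: Lemma~\ref{L.3.1} applied pointwise to the kernel $s\mapsto f(sz)/(1-stz)$ in \eqref{eq.op_p}, the bound $|1-stz|\ge 1-st$, Fubini, monotonicity of $M_p$, and the evaluation of $\int_0^1(1-st)^{-p}\,ds$, which is where $p=1$ and $p>1$ separate. The only divergence is in how \eqref{NO} is handled: the paper first gets continuity and the upper bound $\frac{1}{1-t}$ from the factorization $C_t=C_0\circ M_t$, where $M_t$ is multiplication by $h_t(z)=\frac{1}{1-tz}$ with $\|M_t\|_{H^p\to H^p}\le\|h_t\|_\infty=\frac{1}{1-t}$, combined with $\|C_0\|_{H^p\to H^p}=1$ from Proposition~\ref{Cont_C0_Hp}, and it gets the lower bound from the fixed point $C_th_t=h_t$ with $h_t\in H^p$; you instead absorb the upper bound into the same kernel integral via $(1-st)^{-p}\le(1-t)^{-p}$ and obtain the lower bound by testing on $\mathbbm{1}$ through $\|C_t\mathbbm{1}\|_p\ge M_p(0,C_t\mathbbm{1})=|(C_t\mathbbm{1})(0)|=1$, both of which are valid. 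Your variant is slightly more self-contained (it needs neither Proposition~\ref{Cont_C0_Hp} nor the multiplication operator), whereas the paper's factorization and its eigenfunction $h_t$ carry additional structural information that is reused later (e.g. in the spectral and mean ergodicity results).
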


\begin{proof}
	Fix $p\geq 1$ and  $t\in (0,1)$. The function $h_t(z)=\frac{1}{1-tz}$, for $z\in\D$, belongs to  $ A(\D)\subseteq H^\infty$ (see the proof of Proposition \ref{Dyn-Hv}) and satisfies $\|h_t\|_\infty=\frac{1}{1-t}$, because $|h_t(z)|\leq \frac{1}{1-t}$ and $\lim_{z\to 1^-}h_t(z)=\frac{1}{1-t}$. Since $h_t\in H^\infty$, the multiplication operator $M_t\colon H^p\to H^p$ defined by
	\[
	M_tf:=h_tf,\quad f\in H^p,
	\]
	is continuous with $\|M_t\|_{H^p\to H^p}\leq \|h_t\|_{\infty}=\frac{1}{1-t}$, as  is well known (and routine to verify).
	
Since $C_t=C_0\circ M_t$ (cf. \eqref{eq.op_p} and \eqref{eq.delta}), it follows from Proposition \ref{Cont_C0_Hp} that $C_t\in \cL(H^p)$ and that $\|C_t\|_{H^p\to H^p}\leq \|C_0\|_{H^p\to H^p}\cdot \|M_t\|_{H^p\to H^p}=\frac{1}{1-t}$.

On the other hand, the function $h_t$ belongs to $H^p$ and satisfies $C_th_t=h_t$. This yields that $1 \leq \|C_t\|_{H^p\to H^p}$. Hence, \eqref{NO} has been  established.

Concerning \eqref{norn-Hp1} let $p=1$. Then, for every $f\in H^1$ and $r\in [0,1)$, by \eqref{eq.disnorm} and  Fubini's theorem we obtain
	 \begin{align*}
	 	M_1(r,C_tf)=&\frac{1}{2\pi}\int_0^{2\pi}\left|\int_0^1\frac{f(sre^{i\theta})}{1-srte^{i\theta}}ds\right|d\theta\leq \frac{1}{2\pi}\int_0^{2\pi}\left(\int_0^1\frac{|f(sre^{i\theta})|}{1-ts}ds\right)d\theta\\
	 	=&\int_0^1\frac{1}{1-ts}\left(\frac{1}{2\pi}\int_0^{2\pi}|f(sre^{i\theta})|d\theta\right)ds=\left(\frac{-\log(1-t)}{t}\right)M_1(r,f)\\
	 	\leq & \left(\frac{-\log(1-t)}{t}\right)\|f\|_1.
	 \end{align*}
 Again via \eqref{eq.disnorm} it follows that  $\|C_tf\|_1\leq \left(\frac{-\log(1-t)}{t}\right)\|f\|_1$, for every $f\in H^1$. Hence, $\|C_t\|_{H^1\to H^1}\leq \frac{-\log(1-t)}{t}$ which is \eqref{norn-Hp1}.

 Let $1<p<\infty$. Then, for every $f\in H^p$ and $r\in [0,1)$, by   Fubini's theorem and Lemma \ref{L.3.1} (because $s\mapsto \frac{f(sz)}{1-stz}$ is integrable over $[0,1]$ for each $z\in\D$) we obtain via \eqref{eq.disnorm} that
 \begin{align*}
 	M_p(r,C_tf)^p=&\frac{1}{2\pi}\int_0^{2\pi}\left|\int_0^1\frac{f(sre^{i\theta})}{1-srte^{i\theta}}ds\right|^pd\theta\leq \int_0^1\frac{1}{(1-ts)^p}\left(\frac{1}{2\pi}\int_0^{2\pi}|f(sre^{i\theta})|^pd\theta\right)ds\\
 	= &\int_0^1 \frac{1}{(1-ts)^p}M_p(rs,f)^pds\leq \left(\int_0^1\frac{1}{(1-ts)^p}ds\right)\|f\|_p^p\\
 	=& \frac{1}{t(p-1)}\left(\frac{1}{(1-t)^{p-1}}-1\right)\|f\|_p^p.
 \end{align*}
It follows that
\[
\|C_t\|_p\leq \left[\frac{1}{t(p-1)}\left(\frac{1}{(1-t)^{p-1}}-1\right)\right]^{1/p}\|f\|_p, \quad f\in H^p,
\]
from which \eqref{norn-Hp} follows.
\end{proof}

\begin{remark}\label{BetterE}\rm
For each $1\leq p<\infty$ and  $t\in (0,1)$ 	the  estimate of $\|C_t\|_{H^p\to H^p}$ given in \eqref{norn-Hp1} and \eqref{norn-Hp} is better than the upper estimate given in \eqref{NO}. Indeed, for $p=1$ we have $\frac{-\log(1-t)}{t}<\frac{1}{1-t}$ for every $t\in (0,1)$, as  was shown in \cite[Example 2.2]{ABR-NN}. For a fixed $1<p<\infty$,  observe that
\begin{equation}\label{eq.Stima}
\left[\frac{1}{t(p-1)}\left(\frac{1}{(1-t)^{p-1}}-1\right)\right]^{1/p}<\frac{1}{1-t},\quad t\in (0,1),
\end{equation}
if and only if
\[
\frac{1}{t(p-1)}\left(\frac{1}{(1-t)^{p-1}}-1\right)<\frac{1}{(1-t)^p},\quad t\in (0,1).
\]
Therefore, the inequality \eqref{eq.Stima} is satisfied if and only if
\[
\frac{1-(1-t)^{p-1}}{t(p-1)}\cdot\frac{1}{(1-t)^{p-1}}<\frac{1}{(1-t)^p},\quad t\in (0,1),
\]
that is, if and only if
 \begin{equation}\label{eq.Sti}
 \frac{1-(1-t)^{p-1}}{t(p-1)}<\frac{1}{1-t},\quad t\in (0,1).
 \end{equation}
To show the validity of \eqref{eq.Sti} it suffices to establish, for each $\alpha>0$,  that
\begin{equation}\label{eq.Sti2}
\frac{1-(1-t)^\alpha}{\alpha t}<\frac{1}{1-t},\quad t\in (0,1).
\end{equation}
So, fix $\alpha\geq 0$ and define $\gamma(t)=[1-(1-t)^\alpha](1-t)-\alpha t$, for $t\in [0,1]$. The function $\gamma$ is continuous in $[0,1]$ and differentiable in $(0,1)$. Furthermore, $\gamma(0)=0$ and $\gamma(1)=-\alpha<0$. On the other hand, for each $t\in (0,1)$, we have
\begin{align*}
\gamma'(t)&=\alpha (1-t)^{\alpha-1}(1-t)-[1-(1-t)^\alpha]-\alpha\\
&=\alpha(1-t)^\alpha-1+(1-t)^\alpha -\alpha\\
&=(\alpha+1)[(1-t)^\alpha-1].
\end{align*}
Since $\alpha>0$ and  $0<(1-t)^\alpha<1$ for $t\in (0,1)$, it follows that   $(1-t)^\alpha-1<0$ for $t\in (0,1)$. Accordingly,  $\gamma'(t)<0$ for every $t\in (0,1)$. This means that the function $\gamma$ is decreasing in $[0,1]$ and hence, $\gamma(t)<\gamma(0)=0$ for every $t\in (0,1)$. So, we can conclude that
\[
\gamma(t)=[1-(1-t)^\alpha](1-t)-\alpha t<0, \quad t\in (0,1),
\]
that is,
\[
[1-(1-t)^\alpha](1-t)<\alpha t, \quad t\in (0,1).
\]
The previous inequality implies that \eqref{eq.Sti2} is valid. This completes the proof that \eqref{eq.Sti} is valid. Hence, also \eqref{eq.Stima} is valid.
\end{remark}

In order to establish the compactness of the operators  $C_t\in \cL(H^p)$ we need to introduce some additional operators and  preliminaries.
	
For each $1\leq p\leq\infty$,  define
\[
H_0^p:=\{f\in H^p\, :\, f(0)=0\}.
\]
Accordingly, $H_0^p=\Ker(\delta_0)$ is a closed 1-codimensional subspace of $H^p$. Moreover,  $A_0(\D):=\{f\in A(\D):\, f(0)=0\}$ is a closed 1-codimensional subspace of $A(\D)$.

Consider now the operator  $S\colon H(\D)\to H(\D)$  defined by
\[
(Sf)(z):=zf(z),\quad f\in H(\D),\ z\in \D,
\]
which belongs to $\cL(H(\D))$ and is called
 the \textit{forward shift}. The following Lemma \ref{P_Hp} concerning the forward and backward shift operators is certainly known. Observe that the operator $S^{-1}$ in the proof of Lemma \ref{P_Hp} below is the restriction to the closed subspace $H_0^p$ of $H^p$ of the \textit{backward shift} operator given by $(Bf)(z):=(f(z)-f(0))/z$, for $ z \in \D$. This operator and its invariant subspaces on Hardy spaces have been thoroughly investigated  in \cite{CiRo}. We present a formulation  of Lemma \ref{P_Hp} which is useful for our purposes; a proof is included for the sake of completeness.

\begin{lemma}\label{P_Hp} Let  $1\leq p\leq\infty$. Then $S\in \cL(H^p)$ with $\|S\|_{H^p\to H^p}=1$ and $S(H^p)=H^p_0$. Moreover, the operator $S$ is injective with the  inverse operator $S^{-1}\colon S(H^p)\to H^p$   continuous and satisfying  $\|S^{-1}\|_{S(H^p)\to H^p}=1$.
	
	The operator $S\in \cL(A(\D))$ with operator norm $\|S\|_{A(\D)\to A(\D)}=1$ and range $S(A(\D))=A_0(\D)$. Moreover, $S$ is injective with the  inverse operator $S^{-1}\colon S(A(\D))\to A(\D)$   continuous and satisfying  $\|S^{-1}\|_{S(A(\D))\to A(\D)}=1$.
\end{lemma}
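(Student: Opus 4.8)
The plan is to verify each claim about $S$ and $S^{-1}$ on $H^p$ first, then transfer everything to $A(\mathbb D)$ by essentially the same arguments. I would begin with the norm computation for $S$ on $H^p$, $1\le p<\infty$: for $f\in H^p$ and $r\in[0,1)$ one has $M_p(r,Sf)^p=\frac{1}{2\pi}\int_0^{2\pi}|re^{i\theta}|^p|f(re^{i\theta})|^p\,d\theta=r^p M_p(r,f)^p\le M_p(r,f)^p$, so taking the supremum over $r$ gives $\|Sf\|_p\le\|f\|_p$, hence $Sf\in H^p$ and $\|S\|_{H^p\to H^p}\le 1$. Equality is seen by testing on $f=\mathbbm 1$, since $S\mathbbm 1(z)=z$ has $M_p(r,z)=r$ and thus $\|S\mathbbm 1\|_p=1=\|\mathbbm 1\|_p$. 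For $p=\infty$ the same estimate with $M_\infty$ in place of $M_p$ works verbatim.

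Next I would identify the range. Clearly $(Sf)(0)=0$, so $S(H^p)\subseteq H_0^p$. Conversely, if $g\in H_0^p$ then $g(z)=\sum_{n\ge 1}\hat g(n)z^n$, so the function $f(z):=g(z)/z=\sum_{n\ge 0}\hat g(n+1)z^n$ is holomorphic on $\mathbb D$ and $Sf=g$; it remains to check $f\in H^p$. This is where the backward-shift estimate enters: for $r\in(0,1)$, the maximum principle / subharmonicity of $|f|^p$ applied on the annulus, or more simply the observation that $M_p(r,f)=\frac1r M_p(r,g)\le \frac1r\|g\|_p$ is not uniform near $r=0$ — so instead I would argue that $M_p(\rho,f)$ is increasing in $\rho$ (as for any $H(\mathbb D)$ function, $M_p(\cdot,f)$ is nondecreasing) and for $\rho$ close to $1$, $M_p(\rho,f)=\rho^{-1}M_p(\rho,g)\le \rho^{-1}\|g\|_p$, whence letting $\rho\uparrow 1$ gives $\|f\|_p=\lim_{\rho\to 1^-}M_p(\rho,f)\le\|g\|_p$. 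Thus $f\in H^p$ with $\|f\|_p\le\|g\|_p$, proving $S(H^p)=H_0^p$, that $S$ is injective (immediate, since $Sf=0\Rightarrow zf(z)\equiv 0\Rightarrow f\equiv 0$), and that $S^{-1}=B|_{H_0^p}$ satisfies $\|S^{-1}g\|_p=\|f\|_p\le\|g\|_p$, i.e. $\|S^{-1}\|_{S(H^p)\to H^p}\le 1$; equality follows again by testing on $g(z)=z$, for which $S^{-1}g=\mathbbm 1$ and $\|g\|_p=1=\|\mathbbm 1\|_p$.

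For the disc algebra part, I would note that $A(\mathbb D)$ is a closed subalgebra of $H^\infty$ and that $Sf=zf$ is the product of $f$ with the function $z\mapsto z$, which lies in $A(\mathbb D)$; hence $Sf\in A(\mathbb D)$ whenever $f\in A(\mathbb D)$, and $\|S\|_{A(\mathbb D)\to A(\mathbb D)}\le\|S\|_{H^\infty\to H^\infty}=1$, with equality again by $f=\mathbbm 1$. For the range: $(Sf)(0)=0$ gives $S(A(\mathbb D))\subseteq A_0(\mathbb D)$, and conversely if $g\in A_0(\mathbb D)$ then $f(z)=g(z)/z$ extends continuously to $\overline{\mathbb D}$ — this needs a small argument, namely that $g$ vanishing at $0$ together with holomorphy on $\mathbb D$ makes $g(z)/z$ holomorphic on $\mathbb D$, and continuity up to the boundary holds since on $\overline{\mathbb D}\setminus\{0\}$ it equals the continuous function $\tilde g(z)/z$ while near $0$ it is bounded and holomorphic, hence (removable singularity) continuous there too. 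So $f\in A(\mathbb D)$, $Sf=g$, and $S(A(\mathbb D))=A_0(\mathbb D)$. Injectivity is clear, and $\|S^{-1}\|_{S(A(\mathbb D))\to A(\mathbb D)}=1$ follows because $\|f\|_\infty=\sup_{|z|=1}|g(z)/z|=\sup_{|z|=1}|g(z)|=\|g\|_\infty$ by the maximum principle applied to $f$ on $\overline{\mathbb D}$.

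I expect the only mildly delicate point to be the argument that $g\in H_0^p$ (resp.\ $g\in A_0(\mathbb D)$) forces $g(z)/z\in H^p$ (resp.\ in $A(\mathbb D)$) with control of the norm; everything else is a direct computation with $M_p(r,\cdot)$ and the maximum principle. In the $H^p$ case the clean way around it is precisely the monotonicity of $\rho\mapsto M_p(\rho,f)$ together with the identity $M_p(\rho,f)=\rho^{-1}M_p(\rho,g)$ for $\rho$ near $1$, which sidesteps any bad behaviour of the ratio near the origin; in the $A(\mathbb D)$ case one invokes the removable-singularity theorem for the bounded holomorphic function $g(z)/z$ near $0$.
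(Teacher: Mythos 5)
Your proof is correct and follows essentially the same route as the paper's: the integral-means identity $M_p(r,Sf)=r\,M_p(r,f)$ for the norm bound, division by $z$ with a removable singularity together with $M_p(\rho,g/z)=\rho^{-1}M_p(\rho,g)$ and the monotonicity of $\rho\mapsto M_p(\rho,\cdot)$ to identify the range and control $S^{-1}$, and the test functions $\mathbbm{1}$ and $z\mapsto z$ for equality of the norms. The only cosmetic differences are that you treat $p=\infty$ by the same limiting argument (the paper instead uses the estimate $M_\infty(r,f)\leq\frac{k+1}{k}M_\infty(r,g)$ on $r\in(\frac{k}{k+1},1)$) and that you write out the $A(\D)$ case in detail, which the paper only sketches with ``as for $H^\infty$ it can be verified''.
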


\begin{proof}
Consider first the case $1\leq p<\infty$. Given $f\in H^p$ and $r\in [0,1)$ we have
\[
M_p(r,Sf)^p=\frac{1}{2\pi}\int_0^{2\pi}|re^{i\theta} f(re^{i\theta})|^p\,d\theta\leq \frac{1}{2\pi}\int_0^{2\pi}| f(re^{i\theta})|^p\,d\theta=M_p(r,f)^p.
\]
Accordingly, $\|Sf\|_p\leq \|f\|_p$, which implies that $S\in \cL(H^p)$ and $\|S\|_{H^p\to H^p}\leq 1$. Moreover,  for every $n\in\N$, observe that
\[
\|z^n\|_p=\sup_{0\leq r<1}M_p(r,z^{n})=\sup_{0\leq r<1}\left(\frac{1}{2\pi}\int_0^{2\pi}|(re^{i\theta})^n|^p\, d\theta\right)^{1/p}=\sup_{0\leq r<1}r^n=1
\]
and that $S(z^n)=z^{n+1}$, from which we can conclude that $\|S\|_{H^p\to H^p}=1$.

Clearly, $S$ is injective and satisfies $S(H^p)\subseteq H^p_0$.

To show that $S(H^p)=H_0^p$ and that the inverse operator $S^{-1}\colon S(H^p)\to H^p$  is continuous, we proceed as follows.

Given $g\in H^p_0$, let $f(z):=\frac{g(z)}{z}=\frac{g(z)-g(0)}{z-0}$, for $z\in \D$. Clearly  $f\in H(\D)$ as $z=0$ is a removable singularity of $f$ by setting $f(0):=g'(0)$. On the other hand, for every  $r\in (0,1)$, we have
\begin{equation*}
	M_p(r,f)^p=\frac{1}{2\pi}\int_0^{2\pi}|f(re^{i\theta})|^p\,d\theta=\frac{1}{2\pi}\int_0^{2\pi}\frac{|g(re^{i\theta})|^p}{r^p}\,d\theta=\frac{1}{r^p}M_p(r,g)^p
\end{equation*}
It follows from \eqref{eq.disnorm}
 that  $f\in H^p$ with $g=Sf$ and $\|f\|_p\leq \|g\|_p$. Hence,  $S(H^p)=H_0^p$ (in particular $S(H^p)$ is closed in $H^p$) and  the inverse operator $S^{-1}\colon S(H^p)\to H^p$  is continuous. Moreover,  $\|S^{-1}\|_{S(H^p)\to H^p}=1$ because $\|S^{-1}g\|_p=\|f\|_p\leq \|g\|_p$ and the function $g_0(z):=z$, for $z\in\D$, belongs to $H^p_0$ and satisfies $\|g_0\|_p=1$ with $S^{-1}g_0=\mathbbm{1}$.

The case $p=\infty$ follows along the same lines.  We only observe that
\[
\|Sf\|_\infty=\sup_{z\in\D}|zf(z)|\leq \sup_{z\in\D}|f(z)|=\|f\|_\infty, \quad f\in H^\infty,
\]
and that $\|S\mathbbm{1}\|_\infty=\|g_0\|_\infty=1$. So, $S\in \cL(H^\infty)$ and $\|S\|_{H^\infty\to H^\infty}=1$.

Moreover, if $g\in H^\infty_0$, then the function $f(z):=\frac{g(z)}{z}$, for $z\in \D\setminus\{0\}$, and $f(0):=g'(0)$ belongs to $H^\infty$ (see above) and satisfies, for each $k\in\N$ and  $r\in (\frac{k}{k+1},1)$, the inequality
\[
M_\infty(r,f)=\max_{|z|=r}\left|\frac{g(z)}{z}\right|\leq \frac{k+1}{k}M_\infty(r,g).
\]
It follows that  $\|f\|_\infty\leq \frac{k+1}{k}\|g\|_\infty$ for every $k\in\N$. This implies that  $f\in H^\infty$ with $g=Sf$ and  $\|f\|_\infty\leq \|g\|_\infty$. Now proceed as for $H^p$ with $1\leq p<\infty$.

Concerning $A(\D)$, note that it is a closed, invariant subspace of $S\colon H^\infty\to H^\infty$ and so $S\in \cL(A(\D))$ with $\|S\|_{A(\D)\to A(\D)}\leq 1$. Since $\mathbbm{1}\in A(\D)$ satisfies $\|S\mathbbm{1}\|_\infty=\|g_0\|_\infty=1$, it follows that $\|S\|_{A(\D)\to A(\D)}= 1$, Clearly $S(A(\D))\subseteq A_0(\D)$. As for $H^\infty$ it can be verified that $S(A(\D))=A_0(\D)$, that $S$ is injective and that $\|S^{-1}\|_{S(A(\D))\to A(\D)}= 1$.
\end{proof}

\begin{remark}\label{R.3.6}\rm
It follows from $\|S\|_{H^p\to H^p}=1$, for each $1\leq p\leq \infty$, that  the operator $S\colon H^p\to H^p_0$ also satisfies  $\|S\|_{H^p\to H^p_0}=1$.
\end{remark}

We now investigate  a further class of operators.
For a fixed  $g\in H(\D)$, let us consider the operators  $ V_g\colon H(\D)\to H(\D)$ and $ T_g\colon H(\D)\to H(\D)$  defined by
\begin{equation}\label{eq.Volterra1}
	(V_gf)(0):=f(0),\ 	(V_gf)(z):=\frac{1}{z}\int_0^z f(\xi)g'(\xi)\,d\xi,\quad f\in H(\D),\ z\in\D\setminus\{0\},
\end{equation}
and
\begin{equation}\label{eq.Volterra2}
	(T_gf)(z):=\int_0^z f(\xi)g'(\xi)\,d\xi, \quad f\in H(\D),\ z\in\D.
\end{equation}
Note that $(T_gf)(0)=0$ for each $f\in H(\D)$. The operators $V_g$ and $T_g$   are called \textit{Volterra-type}  operators. Both operators $V_g$ and $T_g$ act continuously in $H(\D)$.
They have been investigated on different spaces of holomorphic  functions by many authors. We refer to \cite{Cont-a,Si} and the references therein.

The important connection to this paper is that the generalized Cesàro operators  $C_t\in \cL(H(\D))$, for $t\in (0,1)$, are Volterra type operators of the kind $V_g$ for suitable functions $g$. Indeed,  fix $t\in (0,1)$. Given  $f\in H(\D)$,  by \eqref{eq.formula-int} we have that $(C_tf)(0)=f(0)$ and
\begin{align*}
	(C_tf)(z) &=\frac{1}{z}\int_0^z\frac{f(\xi)}{1-t\xi}\,d\xi=\frac{1}{z}\int_0^zf(\xi)\frac{1}{1-t\xi}\,d\xi\\
	&=\frac{1}{z}\int_0^z f(\xi)\left(\frac{-\log (1-t\xi)}{t}\right)'\,d\xi, \quad z\in\D\setminus\{0\},
\end{align*}
where the function
\begin{equation}\label{delta}
g_t(z):=\frac{-\log (1-tz)}{t}=\sum_{n=0}^\infty\frac{t^n}{n+1}z^n
\end{equation}
 is holomorphic on $B(0,1/t)$ with $1/t>1$. Since $\overline{\D}\subseteq B(0,1/t)$, we can conclude that $g_t\in A(\D)\subseteq H(\D)$. Accordingly,
\begin{equation}\label{eq.UguaL}
C_tf=V_{g_t}f, \quad f\in H(\D).
\end{equation}

Topological properties such as continuity and (weak) compactness of the operators $V_g$ and $T_g$, for $g\in H(\D)$, when acting in $H^p$ are related to each other as the following result shows.

\begin{lemma}\label{L-Hp} Let $g\in H(\D)$ and $1\leq p\leq \infty$. Then $V_g\colon H^p\to H^p$ is continuous (resp. compact, resp. weakly compact) if and only if $T_g\colon H^p\to H^p$ is continuous (resp. compact, resp. weakly compact). In the case of continuity we have $\|T_g\|_{H^p\to H^p}=\|V_g\|_{H^p\to H^p}$.
	\end{lemma}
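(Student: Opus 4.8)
The key observation is the elementary relation between the two operators: for every $f\in H(\D)$ and $z\in\D\setminus\{0\}$ we have $(T_gf)(z)=z\cdot(V_gf)(z)$, while $(T_gf)(0)=0$ and $(V_gf)(0)=f(0)$. In the notation of Lemma \ref{P_Hp} this says $T_g=S\circ V_g$ on $H(\D)$, where $S$ is the forward shift. Conversely, since $T_g$ maps into $H_0^p$ (as $(T_gf)(0)=0$) and $S^{-1}$ is defined and continuous on $S(H^p)=H_0^p$ with $\|S^{-1}\|_{S(H^p)\to H^p}=1$, we have $V_g=S^{-1}\circ T_g$ on $H_0^p$; but one must be slightly careful because $V_g f$ need not lie in $H_0^p$. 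In fact $V_g f - f(0)\mathbbm{1}=S^{-1}(T_g f)$, since both sides vanish nowhere-obstructs: $(V_g f)(z)-f(0)=\frac1z\int_0^z f g' = \frac1z (T_g f)(z)=(S^{-1}T_g f)(z)$ for $z\neq 0$, and both sides extend holomorphically. This is the structural identity driving everything.

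\emph{Plan.} First I would record the identity $T_g=S\circ V_g$ and, for the reverse direction, $V_g f = f(0)\,\mathbbm{1} + S^{-1}(T_g f)$ for $f\in H^p$ (the second summand being well-defined by Lemma \ref{P_Hp} once we know $T_g f\in H^p$). Then the three equivalences follow formally: $S\in\cL(H^p)$ (Lemma \ref{P_Hp}), so if $V_g\in\cL(H^p)$ then $T_g=SV_g\in\cL(H^p)$; compactness and weak compactness are likewise preserved under composition with the bounded operator $S$. Conversely, if $T_g\in\cL(H^p)$ then $S^{-1}T_g\in\cL(H^p)$ (since $T_g$ maps into $S(H^p)=H_0^p$ where $S^{-1}$ is bounded), and adding the rank-one operator $f\mapsto f(0)\mathbbm{1}$ preserves continuity, compactness and weak compactness; hence $V_g\in\cL(H^p)$, resp. is compact, resp. weakly compact. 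For the norm equality in the continuous case: from $T_g=SV_g$ and $\|S\|_{H^p\to H^p}=1$ we get $\|T_g\|\leq\|V_g\|$; for the reverse, note $T_g f\in H_0^p$, so $V_g f - f(0)\mathbbm{1}=S^{-1}(T_g f)$ with $\|S^{-1}g\|_p\leq\|g\|_p$ — but this only bounds $V_g f$ up to the constant term. To get $\|V_g\|\leq\|T_g\|$ cleanly I would instead argue directly with the integral means: for $z\neq 0$, $|(V_g f)(z)|=|z|^{-1}|(T_gf)(z)|$, so $M_p(r,V_g f)\geq M_p(r,T_g f)$ is the wrong direction; rather $M_p(r,T_gf)=$ integral of $|z|^p|(V_gf)|^p$ over $|z|=r$ which is $r^p M_p(r,V_gf)^p$...

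\emph{Cleaner route for the norm.} Observe that $\|T_g f\|_p = \sup_{0\le r<1} M_p(r,T_g f) = \sup_{0\le r<1} r\,M_p(r, V_g f)$. Since $r\uparrow 1$ and, by the maximum-type growth, $M_p(r,V_g f)$ is increasing in $r$ (it is the integral mean of a holomorphic function), we get $\sup_r r\,M_p(r,V_g f)=\sup_r M_p(r,V_g f)=\|V_g f\|_p$, hence $\|T_g f\|_p=\|V_g f\|_p$ for every $f$, and the norm identity $\|T_g\|_{H^p\to H^p}=\|V_g\|_{H^p\to H^p}$ follows immediately (this simultaneously re-proves both inequalities, and also the boundedness equivalence). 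For $p=\infty$ the same computation works with $M_\infty(r,\cdot)$. This is the slickest path and I would lead with it, then note the three topological equivalences follow from $T_g=SV_g$, $V_g=($rank one$)+S^{-1}T_g$, and the fact that $S,S^{-1}$ are bounded with $S^{-1}$ bounded on $H_0^p\supseteq T_g(H^p)$.

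\emph{Expected main obstacle.} The only genuinely delicate point is justifying that $M_p(r, V_g f)$ is nondecreasing in $r$ (equivalently, that $V_g f\in H(\D)$, which is automatic, and that integral means of holomorphic functions increase — the Hardy–Riesz convexity/monotonicity of $M_p(r,\cdot)$, standard from \cite{Du}). Everything else — the identity $T_g f = z\,(V_g f)(z)$, continuity of $S$ and $S^{-1}$ from Lemma \ref{P_Hp}, and stability of compactness/weak compactness under composition with bounded operators and under rank-one perturbations — is routine. I would therefore spend one line invoking the monotonicity of integral means from the Hardy space theory and keep the rest brief.
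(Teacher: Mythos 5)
Your overall route is the paper's: factor through the forward shift and invoke Lemma \ref{P_Hp}. However, one of your structural identities is wrong. By the definition \eqref{eq.Volterra1}, $(V_gf)(z)=\frac{1}{z}\int_0^z f(\xi)g'(\xi)\,d\xi$ for $z\in\D\setminus\{0\}$, so your equation ``$(V_gf)(z)-f(0)=\frac{1}{z}\int_0^z fg'$'' contradicts the definition (and also your own, correct, identity $T_g=S\circ V_g$); consequently the claimed relation $V_gf=f(0)\,\mathbbm{1}+S^{-1}(T_gf)$ is false. No rank-one correction is needed: since $S(V_gf)=T_gf$, $S$ is injective, and $T_g(H^p)\subseteq H^p_0=S(H^p)$, one has $V_g=S^{-1}\circ T_g$ exactly, which is precisely what the paper uses. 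The error happens to be harmless for the three qualitative equivalences (a rank-one perturbation preserves continuity, compactness and weak compactness anyway), but it is what blocked your first attempt at $\|V_g\|_{H^p\to H^p}\leq\|T_g\|_{H^p\to H^p}$: with the corrected identity and $\|S^{-1}\|_{H^p_0\to H^p}=1$ from Lemma \ref{P_Hp} that inequality is immediate, and combined with $T_g=S\circ V_g$ and $\|S\|_{H^p\to H^p_0}=1$ it gives the norm equality in one line, as in the paper.

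Your ``cleaner route'' for the norm is correct and is a genuine (if small) variant worth keeping: from $(T_gf)(z)=z\,(V_gf)(z)$ one gets $M_p(r,T_gf)=r\,M_p(r,V_gf)$ (likewise for $M_\infty$), and since $M_p(r,\cdot)$ is nondecreasing in $r$ — already recorded in \eqref{eq.disnorm}, so no delicate point remains — it follows that $\sup_{0\leq r<1}r\,M_p(r,V_gf)=\sup_{0\leq r<1}M_p(r,V_gf)$, i.e. $\|T_gf\|_p=\|V_gf\|_p$ in $[0,\infty]$ for every $f$. This yields simultaneously the equivalence of boundedness and the equality of operator norms, which the paper instead deduces from the two composition identities. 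For the compactness and weak compactness equivalences you still need those operator identities, so replace your rank-one formula by $V_g=S^{-1}\circ T_g$ and the argument is complete.
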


	\begin{proof} Fix $1\leq p\leq \infty$. Assume first that  $V_g\in \cL(H^p)$. By Lemma \ref{P_Hp} the forward shift operator  $S\in \cL(H^p)$ and so also $T_g=S\circ V_g$  belongs  to $\cL(H^p)$. Conversely, suppose that $T_g\colon H^p\to H^p$ is continuous. Clearly, $T_g(H^p)\subseteq H^p_0$ and $T_g\colon H^p\to H^p_0$ is also continuous. Since  the inverse operator $S^{-1}\colon H^p_0\to H^p$  exists and is continuous, the operator $V_g=S^{-1}\circ T_g$ belongs  to $\cL(H^p)$. The proof for the compactness (resp.  weak compactness) follows along the same lines.

From the fact that  $\|S\|_{H^p\to H^p_0}=\|S^{-1}\|_{H^p_0\to H^p}=1$ (cf.  Lemma \ref{P_Hp} and Remark \ref{R.3.6}), together with  $T_g=S\circ V_g$ and  $V_g=S^{-1}\circ T_g$, it follows that $\|T_g\|_{H^p\to H^p}=\|V_g\|_{H^p\to H^p}$.
	\end{proof}

 The following definitions play an important role; see \cite{Z} for more details.
The space $BMOA$ consists of all functions $f\in  H^2$ such that
  	\[
  |f(0)|+\sup_{a\in\D}\|f\circ \phi_a-f(a)\|_2<\infty,
  	\]
  	where $\phi_a$, for $a\in\D$, is the family of M\"obius automorphisms of $\D$ given by $\phi_a(z):=\frac{z-a}{1-\overline{a}z}$,  for $z\in\D$. The space $VMOA $ consists of  all functions  $f\in  BMOA$ satisfying
  \[
  \lim_{|a|\to 1}\|f\circ \phi_a-f(a)\|_2=0.
  \]
  The space $VMOA$ is the closure of the polynomials in $BMOA$, \cite[Theorem 5.5]{Gir}. In particular, $H^\infty\subseteq BMOA$ and $A(\D)\subseteq VMOA$, \cite[Theorem 5.5 and Remark 5.2]{Gir}.

The following result collects together various facts concerning the operators $T_g$ when they act in the $H^p$-spaces.

\begin{theorem}\label{T.38} Let $1\leq p<\infty$.
	\begin{itemize}
		\item[\rm (i)] The operator $T_g\colon H^p\to H^p$ is compact if and only if $g\in VMOA$. In particular, if $g\in A(\D)$, then both $T_g\colon H^p\to H^p$ and $V_g=S^{-1}\circ T_g\colon H^p\to H^p$ are compact.
		\item[\rm (ii)] Let $t\in (0,1)$. The generalized Cesàro operator $C_t\colon H^p\to H^p$ is compact.
		Hence, also the operator $S_t:=T_{g_t}=S\circ C_t\colon H^p\to H^p$ is  compact, where $g_t$ is given by \eqref{delta}.
	\end{itemize}
	\end{theorem}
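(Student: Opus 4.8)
The plan is to obtain part (i) from the known description of the Volterra operator $T_g$ on Hardy spaces, and then to deduce part (ii) by recognizing $C_t$ as such an operator whose symbol lies in $A(\D)$.

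For (i), I would invoke the classical fact that, for $1\le p<\infty$, the operator $T_g\colon H^p\to H^p$ is bounded exactly when $g\in BMOA$ and is compact exactly when $g\in VMOA$; this is available in \cite{AleSi1} (see also \cite{Si, Cont-a} and the references therein), and both implications are part of that characterization. For the ``in particular'' assertion I would then use the inclusion $A(\D)\subseteq VMOA$ recorded above from \cite[Theorem 5.5 and Remark 5.2]{Gir}: if $g\in A(\D)$, then $g\in VMOA$, so $T_g\colon H^p\to H^p$ is compact, and Lemma \ref{L-Hp} immediately yields that $V_g\colon H^p\to H^p$ is compact as well. Equivalently, one may write $V_g=S^{-1}\circ T_g$, where $S^{-1}\colon H_0^p\to H^p$ is continuous by Lemma \ref{P_Hp}, so that $V_g$ is the composition of a continuous operator with a compact one.

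For (ii), fix $t\in(0,1)$. By \eqref{eq.UguaL} one has $C_t=V_{g_t}$ on $H(\D)$, hence on $H^p$, and by \eqref{delta} the symbol $g_t(z)=-\log(1-tz)/t$ is holomorphic on $B(0,1/t)$; since $\overline{\D}\subseteq B(0,1/t)$, it follows that $g_t\in A(\D)$. Applying part (i) with $g=g_t$ then shows that $C_t=V_{g_t}\colon H^p\to H^p$ is compact. Finally, $S_t=T_{g_t}=S\circ V_{g_t}=S\circ C_t$; since $S\in\cL(H^p)$ with $\|S\|_{H^p\to H^p}=1$ by Lemma \ref{P_Hp} and $C_t$ is compact, the composition $S_t$ is compact, which is also immediate from Lemma \ref{L-Hp}.

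The main obstacle is the $VMOA$-characterization of compactness of $T_g$ used in part (i): its proof, and in particular the sufficiency direction that manufactures a compact operator out of the vanishing-oscillation hypothesis, is the genuinely deep input, taken from the theory of Volterra operators on Hardy spaces. Once that is granted, the rest is a routine assembly of the identity \eqref{eq.UguaL}, the inclusion $A(\D)\subseteq VMOA$, and Lemmas \ref{P_Hp} and \ref{L-Hp}.
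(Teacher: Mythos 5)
Your proposal is correct and follows essentially the same route as the paper: part (i) is taken as the known $VMOA$-characterization of compactness of $T_g$ from \cite{AleSi1} (see also \cite{AleCi,Si}), with the ``in particular'' clause obtained from $A(\D)\subseteq VMOA$ and the factorization $V_g=S^{-1}\circ T_g$ via Lemmas \ref{P_Hp} and \ref{L-Hp}, and part (ii) follows by observing $C_t=V_{g_t}$ with $g_t\in A(\D)$ as in \eqref{eq.UguaL} and \eqref{delta}. No gaps to report.
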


For part (i) of Theorem \ref{T.38} we refer to  \cite{AleSi1} and, for all $p>0$, to \cite{AleCi};  see also \cite[Corollary 4.2]{Si}. Part (ii) of Theorem \ref{T.38} follows from part (i) after recalling (see \eqref{eq.UguaL}), for each $t\in [0,1)$, that $C_t=V_{g_t}$ with $g_t\in A(\D)$.

Compactness criteria for the operators $T_{g}$  on both $H^\infty$ and on $A(\D)$ are formulated in the following result.

\begin{theorem}\label{T-w} {\rm (i)} Let $g\in H(\D)$ and $T_g\colon H^\infty\to H^\infty $ be weakly compact. Then $g\in A(\D)$.
	
{\rm (ii)}	 For $g\in H(\D)$ the following statements are equivalent.
	\begin{itemize}
		\item[\rm (a)] $T_g\colon H^\infty\to H^\infty$ is compact.
		\item[\rm (b)] $T_g\colon A(\D)\to A(\D)$ is compact.
		\item[\rm (c)] $T_g\colon H^\infty\to A(\D)$ is compact.
		\item[\rm (d)] $T_g\colon A(\D)\to H^\infty$ is compact.
	\end{itemize}
	If either one of {\rm (a)--(d)} holds, then necessarily  $g\in A(\D)$.
\end{theorem}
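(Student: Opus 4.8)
The plan is to prove (i) first --- it carries the analytic weight --- and then to obtain (ii) by running the four properties (a)--(d) around a cycle, most of whose implications are soft consequences of the isometric inclusion $A(\D)\subseteq H^\infty$ together with two facts used repeatedly below: that $T_g$ is continuous on $(H(\D),\tau_c)$, and that for $f\in H^\infty$ the dilations $f_\rho(z):=f(\rho z)$, $0<\rho<1$, lie in $A(\D)$, satisfy $\|f_\rho\|_\infty\le\|f\|_\infty$, and converge to $f$ in $(H(\D),\tau_c)$ as $\rho\to 1^-$.

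\emph{Proof of (i).} A weakly compact operator is bounded, so $T_g\in\cL(H^\infty)$, and evaluating at the constant function gives $g-g(0)=T_g\mathbbm{1}\in H^\infty$, hence $g\in H^\infty$; the point is to promote this to $g\in A(\D)$. I would use that $H^\infty$ has the Dunford--Pettis property (indeed it is a Grothendieck space with that property), so that the weakly compact operator $T_g$ is \emph{completely continuous}, i.e.\ it carries weakly null sequences of $H^\infty$ to norm null ones; one then has to exhibit a bounded, weakly null sequence $(f_n)\subset H^\infty$ whose images $(T_gf_n)$ do not tend to $0$ in norm whenever $g\notin A(\D)$ --- built, say, from Blaschke factors or Cauchy-type kernels concentrating at a boundary point $\zeta\in\T$ at which the bounded function $g$ fails to have a limit (equivalently, one argues that $T_g$ would then fix a copy of $c_0$). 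Alternatively, and probably more efficiently, (i) can be extracted from the known description of weak compactness of Volterra operators on $H^\infty$ (see \cite{Cont-a,Si} and the references there), where weak compactness of $T_g$ on $H^\infty$ is characterised by a ``little-oh'' condition on $g$ that already forces $g$ to extend continuously to $\overline{\D}$. \emph{This passage --- from weak compactness to $g\in A(\D)$ --- is the step I expect to be the main obstacle}; the rest is comparatively mechanical.

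\emph{Proof of (ii).} The implications (c)$\Rightarrow$(a) and (c)$\Rightarrow$(b) follow by composing with, respectively restricting to, the inclusion $A(\D)\hookrightarrow H^\infty$, and likewise (a)$\Rightarrow$(d) and (b)$\Rightarrow$(d). The one analytic step in the cycle is (d)$\Rightarrow$(a): if $T_g\colon A(\D)\to H^\infty$ is compact then $K:=\overline{T_g(B_{A(\D)})}^{\,\|\cdot\|_\infty}$ is norm-compact in $H^\infty$, hence $\tau_c$-compact and $\tau_c$-closed; for $f\in B_{H^\infty}$ the dilations satisfy $f_\rho\in B_{A(\D)}$ and $f_\rho\to f$ in $\tau_c$, so by $\tau_c$-continuity of $T_g$ one gets $T_gf_\rho\to T_gf$ in $\tau_c$ with $T_gf_\rho\in K$, whence $T_gf\in K$; thus $T_g(B_{H^\infty})\subseteq K$ and $T_g\colon H^\infty\to H^\infty$ is compact, which is (a). To close the cycle I would prove (a)$\Rightarrow$(c). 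First, by (i), $g\in A(\D)$. Integration by parts gives, for every polynomial $p$,
\[
T_gp = g\,p - p(0)g(0)\mathbbm{1} - \big(S\circ C_0\big)(g\,p'),
\]
which lies in $A(\D)$ because $A(\D)$ is an algebra containing $g,p,p'$ and $C_0$, $S$ map $A(\D)$ into itself (Proposition \ref{Cont_A_D} and Lemma \ref{P_Hp}). For arbitrary $f\in H^\infty$ each $f_\rho$ is a uniform limit on $\overline{\D}$ of its Taylor polynomials, so $T_gf_\rho\in A(\D)$; using compactness of $T_g\in\cL(H^\infty)$ together with the standard subsequence argument (a bounded $\tau_c$-convergent sequence is carried by a compact, $\tau_c$-continuous operator to a norm-convergent one, since norm-compact subsets of $H^\infty$ are $\tau_c$-closed) one obtains $T_gf_\rho\to T_gf$ in $\|\cdot\|_\infty$, hence $T_gf\in A(\D)$. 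Thus $T_g(H^\infty)\subseteq A(\D)$, and since $A(\D)$ is closed in $H^\infty$ the relatively norm-compact set $T_g(B_{H^\infty})$ is relatively compact in $A(\D)$, i.e.\ (c) holds. Consequently (a)--(d) are equivalent, and in each case $g\in A(\D)$ --- either by (i) or, more directly, from (b) via $T_g\mathbbm{1}=g-g(0)\in A(\D)$.
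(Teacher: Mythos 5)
Your proposal is essentially sound, and it is worth noting that the paper itself does not prove this theorem at all: it simply quotes parts (i) and (ii) as Theorems 1.4 and 1.7 of \cite{Cont-a}, adding only the remark that the final assertion $g\in A(\D)$ in (ii) follows from (i). For part (i) you end up on the same footing as the paper: your Dunford--Pettis sketch (build a bounded weakly null sequence whose images under $T_g$ are not norm null whenever $g\notin A(\D)$) is not a proof as it stands --- constructing that sequence is exactly the analytic core of Theorem 1.4 in \cite{Cont-a}, and you rightly flag it as the main obstacle --- so your workable route for (i) is the same citation the paper uses. Where you genuinely go beyond the paper is part (ii): given (i), your cycle (a)$\Rightarrow$(c)$\Rightarrow$(b)$\Rightarrow$(d)$\Rightarrow$(a) is correct and self-contained. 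The soft implications are fine; the dilation/Montel argument for (d)$\Rightarrow$(a) works because a norm-compact set in $H^\infty$ is $\tau_c$-compact, hence $\tau_c$-closed, while $T_g$ is $\tau_c$-continuous and $f_\rho\in B_{A(\D)}$ with $f_\rho\to f$ in $\tau_c$; and in (a)$\Rightarrow$(c) the identity $T_gp=gp-p(0)g(0)\mathbbm{1}-(S\circ C_0)(gp')$ correctly circumvents the possible unboundedness of $g'$, using $C_0(A(\D))\subseteq A(\D)$ (Proposition \ref{Cont_A_D}) and $S(A(\D))\subseteq A(\D)$ (Lemma \ref{P_Hp}), after which the polynomial approximation of $f_\rho$ and the compactness-plus-$\tau_c$ subsequence argument legitimately give $T_g(H^\infty)\subseteq A(\D)$ and hence (c). Your closing observation that the claim $g\in A(\D)$ also follows directly from (b) via $T_g\mathbbm{1}=g-g(0)$ is correct. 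So: (ii) is a valid alternative to the paper's citation of Theorem 1.7 of \cite{Cont-a}, but (i) --- on which your (a)$\Rightarrow$(c) step depends for $g\in A(\D)$ --- must still be imported from the literature, exactly as the paper does.
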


We point out that parts (i) and (ii) of Theorem \ref{T-w} are, respectively, Theorem 1.4 and Theorem 1.7 in  \cite{Cont-a}. The statement in part (ii) that $g\in A(\D)$ follows from part (i).


\begin{prop} Let $t\in [0,1)$. Then $C_t(H^\infty)\subseteq A(\D)$.
\end{prop}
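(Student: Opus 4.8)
The plan is to split off the case $t=0$ from the case $t\in(0,1)$, since the former is already handled by earlier results. For $t=0$, the Hardy operator $C_0$ satisfies $C_0=V_{g_0}$ with $g_0(z)=z\in A(\D)$ (indeed $C_0f(z)=\frac1z\int_0^z f(\xi)\,d\xi=\frac1z\int_0^z f(\xi)g_0'(\xi)\,d\xi$), so by Theorem~\ref{T-w}(ii) — more precisely by the equivalence of (a) and (c) there — the compactness of $T_{g_0}\colon H^\infty\to H^\infty$, which is \cite[Proposition 2.7]{ABR-NN} (or follows from Proposition~\ref{Compact} via Lemma~\ref{L-Hp} for $p=\infty$), gives that $T_{g_0}\colon H^\infty\to A(\D)$ is compact, hence in particular $T_{g_0}(H^\infty)\subseteq A(\D)$. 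Since $C_0=S^{-1}\circ T_{g_0}$ and $S^{-1}\colon A_0(\D)\to A(\D)$ maps into $A(\D)$ by Lemma~\ref{P_Hp}, we get $C_0(H^\infty)\subseteq A(\D)$.

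For $t\in(0,1)$, the key observation is the factorization $C_t=V_{g_t}$ with $g_t(z)=\frac{-\log(1-tz)}{t}\in A(\D)$ recorded in \eqref{eq.UguaL} and \eqref{delta}. Then $T_{g_t}=S\circ C_t=S\circ V_{g_t}$, and since $g_t\in A(\D)$, Theorem~\ref{T-w}(ii) — again the implication (b)$\Rightarrow$(c), noting that $T_{g_t}\colon A(\D)\to A(\D)$ is compact by Theorem~\ref{T.38}(i) applied with the observation $A(\D)\subseteq VMOA$, or directly by Proposition~\ref{Compact} together with Lemma~\ref{L-Hp} — shows that $T_{g_t}\colon H^\infty\to A(\D)$ is compact. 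In particular $T_{g_t}(H^\infty)\subseteq A(\D)$. Now $C_t=S^{-1}\circ T_{g_t}$ on $H^\infty$, and since $T_{g_t}(H^\infty)\subseteq A_0(\D)=S(A(\D))$ (the range lands in functions vanishing at $0$) and $S^{-1}\colon A_0(\D)\to A(\D)$ by Lemma~\ref{P_Hp}, it follows that $C_t(H^\infty)=S^{-1}(T_{g_t}(H^\infty))\subseteq A(\D)$.

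The main thing to be careful about is the chain of identifications of $C_t$ with Volterra-type operators across the various spaces, and making sure that Theorem~\ref{T-w}(ii) applies: it requires $g\in H(\D)$, which is satisfied since $g_t\in A(\D)\subseteq H^\infty\subseteq H(\D)$, and it requires knowing that one of the equivalent compactness conditions (a)--(d) already holds. The cleanest route to that hypothesis is part (i) of Theorem~\ref{T.38} combined with $A(\D)\subseteq VMOA$ (stated just before Theorem~\ref{T.38}), giving compactness of $T_{g_t}\colon H^p\to H^p$ for finite $p$; but for $H^\infty$ one should instead invoke the $H^\infty$-version directly, namely that $T_{g_t}\colon H^\infty\to H^\infty$ is compact, which follows from Proposition~\ref{Compact} (compactness of $C_t$ on $H^\infty$) via $T_{g_t}=S\circ C_t$ and $S\in\cL(H^\infty)$. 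With condition (a) of Theorem~\ref{T-w}(ii) in hand, condition (c) follows, and the rest is the elementary bookkeeping with $S$ and $S^{-1}$ described above. I do not anticipate a genuine obstacle — the statement is essentially a corollary of Theorem~\ref{T-w}(ii) once the Volterra factorization is in place.
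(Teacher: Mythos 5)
Your argument is correct and follows essentially the same route as the paper: the Volterra factorization $C_t=V_{g_t}$ with $g_t\in A(\D)$, compactness of $C_t$ on $H^\infty$ (Proposition \ref{Compact}) transferred to $T_{g_t}$ via Lemma \ref{L-Hp} with $p=\infty$, Theorem \ref{T-w}(ii) to get $T_{g_t}(H^\infty)\subseteq A_0(\D)$, and then $S^{-1}\colon A_0(\D)\to A(\D)$ from Lemma \ref{P_Hp}. Your explicit separate treatment of $t=0$ with $g_0(z)=z$ is only a minor (and slightly more careful) refinement of what the paper does implicitly by taking $g_t$ from \eqref{delta} for all $t\in[0,1)$.
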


\begin{proof}
	Fix $t\in [0,1)$. By  Proposition \ref{Compact} the operator $C_t=V_{g_t}\colon H^\infty\to H^\infty$ is compact and hence, by Lemma \ref{L-Hp} the operator $T_{g_t}\colon H^\infty\to H^\infty$ is compact, where $g_t$ is given in \eqref{delta}. So, Theorem \ref{T-w}(ii) ensures that the operator $T_{g_t}\colon A(\D)\to A(\D)$  is also compact and satisfies $T_{g_t}(H^\infty)\subseteq A(\D)$. Since $(T_{g_t}f)(0)=0$, for every $f\in H(\D)$, actually $T_{g_t}(A(\D))\subseteq A_0(\D)$. Moreover, Lemma \ref{P_Hp} shows that $S^{-1}\colon A_0(\D)\to A(\D)$ continuously. Hence,   $C_t=V_{g_t}=S^{-1}\circ T_{g_t}\colon A(\D)\to A(\D)$ is compact and $C_t(H^\infty)=V_{g_t}(H^\infty)=S^{-1}(T_{g_t}(H^\infty))\subseteq A(\D)$.
\end{proof}

We  are now able to calculate the spectrum of the generalized Cesàro operator $C_t$ and of the operator $S_t=S\circ C_t$, for $t\in [0,1)$, when they  act in $H^p$.

\begin{prop}\label{Spectrum-Hp} Let $1\leq p<\infty$. For each $t\in [0,1)$ the  spectra of $C_t\in \cL(H^p)$  are given by
	\begin{equation}\label{Sp-pt-Hp}
		\sigma_{pt}(C_t;H^p)=\left\{\frac{1}{m+1}\,: m\in\N_0\right\},
	\end{equation}
	and
	\begin{equation}\label{Sp-Hp}
		\sigma(C_t;H^p)=\left\{\frac{1}{m+1}\,: m\in\N_0\right\}\cup\{0\}.
	\end{equation}
\end{prop}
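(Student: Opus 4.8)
The plan is to transfer the spectral computation already carried out for $C_t$ on $H^\infty$ and $A(\D)$ (Proposition \ref{Spectrum}) to the setting of $H^p$ for $1\leq p<\infty$, using the three ingredients that are already available: the compactness of $C_t\in\cL(H^p)$ from Theorem \ref{T.38}(ii), the continuous embedding $H^p\subseteq H(\D)$ identifying a function with its Taylor coefficient sequence, and the known point spectrum of the discrete operator $C_t^\omega\in\cL(\omega)$ from \cite[Lemma 3.6]{CR4}.

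First I would establish the inclusion $\sigma_{pt}(C_t;H^p)\subseteq\{\tfrac{1}{m+1}:m\in\N_0\}$. This is the exact argument from the proof of Proposition \ref{Spectrum}: if $f\in H^p\setminus\{0\}$ and $C_tf=\lambda f$, then passing to Taylor coefficients via the continuous embedding $\Phi\colon H(\D)\to\omega$ and using \eqref{eq.rapp-serie} gives $C_t^\omega\hat f=\lambda\hat f$ in $\omega$ with $\hat f\neq 0$, so $\lambda\in\sigma_{pt}(C_t^\omega;\omega)=\{\tfrac{1}{m+1}:m\in\N_0\}$. For the reverse inclusion, recall from \cite[Lemma 3.6]{CR4} that for each $m\in\N_0$ the eigenspace $\Ker(\tfrac{1}{m+1}I-C_t^\omega)$ is one-dimensional, spanned by an eigenvector $x^{[m]}\in\ell^1$. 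Then $g_m(z):=\sum_{n=0}^\infty x^{[m]}_n z^n$ lies in $A(\D)\subseteq H^p$, and as computed in Proposition \ref{Spectrum} we get $C_t g_m=\tfrac{1}{m+1}g_m$, so $\tfrac{1}{m+1}\in\sigma_{pt}(C_t;H^p)$. This proves \eqref{Sp-pt-Hp}.

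For \eqref{Sp-Hp}, since $C_t\in\cL(H^p)$ is compact (Theorem \ref{T.38}(ii); for $t=0$ this is Proposition \ref{Cont_C0_Hp} combined with the same compactness argument, or one notes $C_0=V_{g_0}$ with $g_0(z)=z\in A(\D)$ and applies Theorem \ref{T.38}(i)), the spectral theory of compact operators on a Banach space gives that $\sigma(C_t;H^p)\setminus\{0\}=\sigma_{pt}(C_t;H^p)$, every nonzero spectral value is an eigenvalue of finite multiplicity, and $0\in\sigma(C_t;H^p)$ since $H^p$ is infinite-dimensional. Combining with \eqref{Sp-pt-Hp} yields $\sigma(C_t;H^p)=\{\tfrac{1}{m+1}:m\in\N_0\}\cup\{0\}$, which is \eqref{Sp-Hp}; note the set $\{\tfrac{1}{m+1}:m\in\N_0\}$ already has $0$ as its only accumulation point, consistent with compactness.

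I do not expect a serious obstacle here: the argument is essentially a verbatim repetition of the $H^\infty$/$A(\D)$ case, the only genuinely new input being that $C_t$ is compact on $H^p$, which has already been secured via the Volterra-operator machinery of Section 3. The one point requiring a word of care is making sure the eigenvectors $g_m$ built from the $\ell^1$-sequences $x^{[m]}$ really do belong to $H^p$ — but this is immediate from $A(\D)\subseteq H^p$ together with the fact that an absolutely summable Taylor series defines a function in $A(\D)$.
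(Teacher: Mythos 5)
Your proposal is correct and follows essentially the same route as the paper: the paper bounds $\sigma_{pt}(C_t;H^p)$ by sandwiching $A(\D)\subseteq H^p\subseteq H(\D)$ (citing the known point spectrum on $H(\D)$ and Proposition \ref{Spectrum}), while you re-run the Taylor-coefficient argument with $C_t^\omega$ and the $\ell^1$-eigenvectors $x^{[m]}$ directly — the same machinery used in the proof of Proposition \ref{Spectrum} — and then conclude via compactness of $C_t$ on $H^p$ exactly as the paper does. Your explicit treatment of the $t=0$ case (via $C_0=V_{g_0}$ with $g_0(z)=z$ and Theorem \ref{T.38}(i)) is a small but welcome clarification, since Theorem \ref{T.38}(ii) as stated covers only $t\in(0,1)$.
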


\begin{proof} Let $1\leq p<\infty$ and $t\in [0,1)$  be fixed.
Recall,  by \cite[Proposition 3.7]{ABR-NN}, that  the point spectrum of  $C_t\in \cL(H(\D))$ is given by $\sigma_{pt}(C_t;H(\D))=\{\frac{1}{m+1}\,:\, m\in\N_0\}$. Since $A(\D)\subseteq H^p\subseteq H(\D)$ with continuous inclusions, we obtain that   $\sigma_{pt}(C_t;A(\D))\subseteq \sigma_{pt}(C_t;H^p)\subseteq \sigma_{pt}(C_t;H(\D))$. Then Proposition \ref{Spectrum} implies the validity of \eqref{Sp-pt-Hp}.
	
Finally, \eqref{Sp-Hp} follows from the fact that $C_t$ is a  compact operator  on  $H^p$; see Theorem   \ref{T.38}(ii).
\end{proof}

\begin{prop}\label{P_spectrumS}Let $1\leq p<\infty$.
	For each $t\in [0,1)$ the  spectra of $S_t\in \cL(H^p)$  are given by
	\begin{equation}\label{Sp-pt-Hp-s}
		\sigma_{pt}(S_t;H^p)=\emptyset; \
		\sigma_r(S_t;H^p)=\{0\};\ \sigma_c(S_t;H^p)=\emptyset.
	\end{equation}
In particular, $\sigma(S_t;H^p)=\{0\}$.
\end{prop}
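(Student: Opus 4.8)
The plan is to compute the kernel, range, and density properties of $\lambda I - S_t$ on $H^p$ for each $\lambda\in\C$, exploiting the factorization $S_t = S\circ C_t$ together with the already-established facts that $C_t\colon H^p\to H^p$ is compact (Theorem~\ref{T.38}(ii)) and that $S$ is the forward shift (Lemma~\ref{P_Hp}). Since $S_t = T_{g_t}$ and $(T_{g_t}f)(0)=0$ for all $f$, the range of $S_t$ is contained in $H_0^p = \Ker(\delta_0)$, which is already a proper closed subspace; this immediately shows $0\in\sigma(S_t;H^p)$ and suggests $0$ is the only spectral point.

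First I would show $\sigma_{pt}(S_t;H^p)=\emptyset$. Suppose $S_tf = \lambda f$ with $f\ne 0$. Evaluating at $z=0$ gives $0 = (S_tf)(0) = \lambda f(0)$. If $\lambda\ne 0$ then $f(0)=0$, so $f\in H_0^p$ and $f = Sh$ for a unique $h\in H^p$ by Lemma~\ref{P_Hp}; then $S(C_tf) = \lambda S h$, and injectivity of $S$ gives $C_tf = \lambda h$, i.e. $C_t(Sh) = \lambda h$. Comparing Taylor coefficients (using \eqref{eq.rapp-serie} applied to $Sh$, whose coefficient sequence is the shift of $\hat h$) leads to a recursion forcing $h=0$, hence $f=0$; alternatively one observes $S_t$ is a weighted composition of $S$ with the quasinilpotent-type operator $C_t$ and argues directly that no nonzero eigenvalue can occur. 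The case $\lambda = 0$: $S_tf = 0$ means $S(C_tf)=0$, so $C_tf = 0$ by injectivity of $S$; but $C_t$ is injective as well (from \eqref{eq.rapp-serie}, $(C_tf)(z)=0$ forces $\hat f(n)=0$ inductively, starting with $\hat f(0)=(C_tf)(0)$), so $f=0$. Thus $\sigma_{pt}(S_t;H^p)=\emptyset$, and in particular $0\notin\sigma_{pt}$.

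Next, to place $0$ in the residual spectrum I would show $\overline{{\rm Im}\,S_t}\ne H^p$. Since ${\rm Im}\,S_t\subseteq H_0^p$ and $H_0^p=\Ker(\delta_0)$ is closed and proper (the constant function $\mathbbm{1}$ is not in it), we get $\overline{{\rm Im}\,S_t}\subseteq H_0^p\subsetneq H^p$. Hence $0\in\sigma_r(S_t;H^p)$. Finally, to conclude $\sigma(S_t;H^p)=\{0\}$, so that necessarily $\sigma_c(S_t;H^p)=\emptyset$ (the three parts being pairwise disjoint and their union being the whole spectrum), it remains to show every $\lambda\ne 0$ lies in $\rho(S_t;H^p)$. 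For this I would argue that $S_t$ is a compact operator: $S\in\cL(H^p)$ and $C_t\in\cL(H^p)$ is compact, so $S_t = S\circ C_t$ is compact. A compact operator on an infinite-dimensional Banach space has spectrum consisting of $0$ together with its eigenvalues; since $\sigma_{pt}(S_t;H^p)=\emptyset$, we obtain $\sigma(S_t;H^p)=\{0\}$. Combined with $0\in\sigma_r$ and $0\notin\sigma_{pt}$, the disjoint decomposition forces $\sigma_c(S_t;H^p)=\emptyset$, completing the proof.

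The main obstacle I anticipate is the verification that $S_t$ has empty point spectrum, specifically ruling out nonzero eigenvalues; the coefficient recursion coming from $C_t(Sh)=\lambda h$ must be shown to have only the trivial solution in $H^p$, and one should be careful that the argument works uniformly in $p\in[1,\infty)$ rather than relying on Hilbert-space structure. Everything else (compactness of $S_t$, the range inclusion ${\rm Im}\,S_t\subseteq H_0^p$, and the disjointness bookkeeping) is routine given the results already established in the excerpt.
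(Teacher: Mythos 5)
Your proposal is correct, and its overall skeleton (empty point spectrum, compactness of $S_t=S\circ C_t$ forcing $\sigma(S_t;H^p)=\{0\}$, and ${\rm Im}\,S_t\subseteq H^p_0$ giving $0\in\sigma_r$, with the disjoint decomposition then killing $\sigma_c$) is exactly the paper's. Where you genuinely diverge is the central step, the exclusion of nonzero eigenvalues: the paper differentiates the integral equation $\lambda f=S_tf$ to obtain the ODE $\lambda f'(z)=\frac{f(z)}{1-tz}$, solves it explicitly as $f(z)=A(1-tz)^{-1/(t\lambda)}$ (treating $t=0$ separately, where the solutions are $Be^{z/\lambda}$), and then uses $(S_tf)(0)=0$ to force $A=0$ (resp. $B=0$); you instead propose a Taylor-coefficient recursion. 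Your route, once written out, is arguably cleaner: from $S_tf=\lambda f$ one can even skip the substitution $f=Sh$, since by \eqref{eq.rapp-serie} the $0$-th coefficient gives $\lambda\hat f(0)=0$ and, for $n\geq 1$, $\lambda\hat f(n)=\frac{1}{n}\sum_{k=0}^{n-1}t^{n-1-k}\hat f(k)$, so induction gives $\hat f(n)=0$ for all $n$. This is a purely formal identity in $H(\D)$, valid for all $t\in[0,1)$ at once (no separate $t=0$ case) and with no use of the $H^p$ norm, which in particular disposes of the worry you raise at the end about uniformity in $p$: no Hilbert-space or even Banach-space structure enters.

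Two small caveats. First, the recursion is the one step you left as an assertion ("leads to a recursion forcing $h=0$"); as shown above it closes in one line, but it should be displayed rather than asserted, since it is the heart of the proof. Second, the parenthetical alternative describing $C_t$ as a "quasinilpotent-type operator" is inaccurate and should be dropped: by Proposition~\ref{Spectrum-Hp}, $\sigma(C_t;H^p)=\{\frac{1}{m+1}:m\in\N_0\}\cup\{0\}$, so $C_t$ is far from quasinilpotent (it is $S_t$ whose spectral radius is $0$). Everything else --- injectivity of $C_t$ via \eqref{eq.rapp-serie}, compactness of $S_t$ from Theorem~\ref{T.38}(ii) (or from $S\in\cL(H^p)$ and compactness of $C_t$), and the residual-spectrum bookkeeping --- matches the paper and is fine.
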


\begin{proof} Let $1\leq p<\infty$ and $t\in (0,1)$ be given. For a fixed $\lambda\in\C\setminus\{0\}$ the equation
	$\lambda f-S_tf=0$ yields, after differentiation, that $\lambda f'(z)=\frac{f(z)}{1-tz}$ for $z\in\D$, which has the solutions
	 $f(z)=A(1-tz)^{-1/(t\lambda)}$ for constants $A\in\C$. Moreover, $S_tf=\lambda f$ together with $S_t(H^p)\subseteq H^p_0$ implies that $f(0)=0$, that is, $A=0$ and hence, $f\equiv 0$.
	
	For $t=0$ and  $\lambda\in\C\setminus\{0\}$ we consider the solutions  of $\lambda f-S_0f=0$, for $f\in H^p$. From the definition of $S_0$ this equation reduces  to $\int_0^z f(\xi)d\xi=\lambda f(z)$. Differentiating yields $f'(z)=\frac{1}{\lambda}f(z)$ and so $f(z)=Be^{z/\lambda}$ for some $B\in\C$. Since $(S_0h)(0)=0$ for  every $h\in H(\D)$, it follows that $B=0$. Hence,  $f\equiv 0$ is the only solution.
	
	Finally, for $\lambda=0$ the equation $S_tf=0$ implies $f\equiv 0$ as both $S\in \cL(H^p)$ and $C_t\in\cL(H^p)$ are injective.

	So, we have established that  $\sigma_{pt}(S_t;H^p)=\emptyset$.
	
	Since the operator $S_t\in \cL(H^p)$ is compact (cf. Theorem \ref{T.38}(ii)), it follows that $\sigma(S_t;H^p)=\sigma_{pt}(S_t;H^p)\cup \{0\}=\{0\}$. Moreover, $S_t$ is injective and $S_t(H^p)=(S\circ C_t)(H^p)\subseteq S(H^p)\subseteq H_0^p$ show that $S_t(H^p)$ is not dense in $H^p$. Hence, $0\in \sigma_r(S_t;H^p)$. The proof of \eqref{Sp-pt-Hp-s} is thereby complete.
	\end{proof}

Our final result establishes certain linear dynamic features of   $C_t$ and $S_t$.

\begin{prop}\label{Dyn-Hv-Hp} Let $1\leq p<\infty$. For each $t\in [0,1)$  both of the operators $C_t\in \cL(H^p)$ and $S_t\in \cL(H^p)$ are power bounded, uniformly mean ergodic and fail to be supercyclic.
\end{prop}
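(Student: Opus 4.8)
The plan is to handle $C_t$ and $S_t$ separately, since although both are compact, their spectral pictures differ. For $C_t$ I would argue exactly as in the proof of Proposition~\ref{Dyn-Hv}, replacing $H^\infty$ (or $A(\D)$) by $H^p$ throughout and invoking Theorem~\ref{Th-ABR}. The key ingredients are already in place: Theorem~\ref{T.38}(ii) gives that $C_t\in\cL(H^p)$ is compact; Proposition~\ref{Spectrum-Hp} gives $\sigma(C_t;H^p)=\{\frac{1}{m+1}:m\in\N_0\}\cup\{0\}$, so $1\in\sigma(C_t;H^p)$ and $\sigma(C_t;H^p)\setminus\{1\}\subseteq\overline{B(0,1/2)}$ (take $\delta=\tfrac12$ as in Remark~\ref{R.12}). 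It remains to verify $\Ker(I-C_t)\cap{\rm Im}(I-C_t)=\{0\}$. As before, $\Ker(I-C_t)={\rm span}\{h_t\}$ with $h_t(z)=\frac{1}{1-tz}$ (this function lies in $A(\D)\subseteq H^p$ and is the eigenvector $g_0$ from the proof of Proposition~\ref{Spectrum}), while ${\rm Im}(I-C_t)$ is closed of codimension $1$ (compactness of $C_t$ plus Fredholm theory, \cite[Theorem 9.10.1]{Ed}) and is contained in $H_0^p=\Ker(\delta_0)$ because $(C_tf)(0)=f(0)$; since $H_0^p$ is itself a hyperplane, ${\rm Im}(I-C_t)=H_0^p$. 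Any $u$ in the intersection is then a multiple of $h_t$ with $u(0)=0$, forcing $u=0$. Theorem~\ref{Th-ABR} then yields that $C_t$ is power bounded and uniformly mean ergodic on $H^p$. Finally, $C_t$ is not supercyclic: the compact transpose $C_t'\in\cL((H^p)')$ has the same nonzero eigenvalues as $C_t$, hence $\sigma_{pt}(C_t';(H^p)')\neq\emptyset$, and \cite[Proposition 1.26]{B-M} applies (note $H^p$ is separable for $1\leq p<\infty$).

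For $S_t$ the situation is in fact simpler. By Proposition~\ref{P_spectrumS} we have $\sigma(S_t;H^p)=\{0\}$, so the spectral radius of $S_t$ is $0$; that is, $S_t$ is quasinilpotent. Hence $\lim_{n\to\infty}\|S_t^n\|_{H^p\to H^p}^{1/n}=0$, which in particular gives $\|S_t^n\|_{H^p\to H^p}\to 0$, so $\sup_{n}\|S_t^n\|_{H^p\to H^p}<\infty$ and $S_t$ is power bounded. The Ces\`aro means satisfy $\|S_{t,[n]}\|\leq\frac1n\sum_{m=1}^n\|S_t^m\|\to 0$ as well, since the averages of a null sequence tend to $0$; therefore $(S_{t,[n]})_n$ converges to $0$ in operator norm and $S_t$ is uniformly mean ergodic. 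For the failure of supercyclicity, one observes that $S_t=S\circ C_t$ maps $H^p$ into $S(H^p)=H_0^p$ (Lemma~\ref{P_Hp}), which is a proper closed subspace; consequently every orbit $\{\lambda S_t^n f:\lambda\in\C,\ n\in\N_0\}$ is contained in ${\rm span}\{f\}\cup H_0^p$ and cannot be dense. (Alternatively, a quasinilpotent operator is never supercyclic, since by the Ansari--type argument any supercyclic operator has $\sigma(T)\cap\T\neq\emptyset$ when $\dim X\geq 1$; but the direct argument via $H_0^p$ is cleaner.)

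The main obstacle, such as it is, lies entirely in the $C_t$ part, and specifically in pinning down ${\rm Im}(I-C_t)$ precisely enough to conclude $\Ker(I-C_t)\cap{\rm Im}(I-C_t)=\{0\}$; everything hinges on the codimension-one Fredholm computation together with the observation that the range is trapped inside the hyperplane $H_0^p$, at which point the two hyperplanes must coincide. Once that is done, Theorem~\ref{Th-ABR} does all the work. The $S_t$ part requires no new idea beyond quasinilpotency, which is handed to us by Proposition~\ref{P_spectrumS}.
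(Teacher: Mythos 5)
Your proposal is correct and follows essentially the same route as the paper: Theorem~\ref{Th-ABR} via compactness, the spectrum from Proposition~\ref{Spectrum-Hp}, and the hyperplane argument ${\rm Im}(I-C_t)=H_0^p$ giving $\Ker(I-C_t)\cap{\rm Im}(I-C_t)=\{0\}$ for $C_t$, together with \cite[Proposition 1.26]{B-M} for non-supercyclicity; and for $S_t$, quasinilpotency from Proposition~\ref{P_spectrumS} plus the observation that orbits lie in ${\rm span}\{f\}\cup H_0^p$. The only difference is that you spell out details the paper delegates (the $\Ker\cap{\rm Im}$ computation, which it refers back to Proposition~\ref{Dyn-Hv}, and the norm convergence of the Ces\`aro means of $S_t$), which is fine.
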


\begin{proof}
	Fix $t\in [0,1)$.  The operator $C_t$ is   compact in    $H^p$  by Theorem \ref{T.38}(ii). Therefore, the transpose operator $C_t'\in \cL((H^p)')$, which is also compact,  has the same non-zero eigenvalues as $C_t$; see \cite[Theorem 9.10-2(2)]{Ed}. In view of Proposition \ref{Spectrum-Hp} it follows that $\sigma_{pt}(C_t';(H^p)')=\{\frac{1}{m+1}\,:\, m\in\N_0\}$. So, we can apply \cite[Proposition 1.26]{B-M} to conclude that $C_t$ is not supercyclic on  $H^p$.
	
	Arguing as in the proof of Proposition \ref{Dyn-Hv}
	one shows that   ${\rm Im}(I-C_t)\cap\Ker (I-C_t)=\{0\}$. On the other hand,
	Proposition \ref{Spectrum-Hp} implies that $1\in \sigma(C_t; H^p)=\{\frac{1}{m+1}\,;\, m\in\N_0\}\cup\{0\}$. Consequently, for $\delta=\frac{1}{2}$, all the assumptions of Theorem \ref{Th-ABR} are satisfied; see also Remark \ref{R.12}. So, we can conclude that $C_t$ is power bounded and uniformly mean ergodic on   $H^p$.

The operator $S_t\in \cL(H^p)$ is also power bounded and uniformly mean ergodic. Indeed, Proposition \ref{P_spectrumS} implies that its spectral radius $r(S_t;H^p)=0$.
 On the other hand, it is  known that  also $r(S_t;H^p)=\lim_{n\to \infty}(\|S_t^n\|_{H^p\to H^p})^{1/n}$; see pp.234-235 of \cite{Ru} for the unital Banach algebra $\mathcal{A}:=\cL(H^p)$. Accordingly, $\|S_t^n\|_{H^p\to H^p}\to 0$ as $n\to\infty$ and so $S_t$ is surely power bounded.

 Given $f\in H^p$, its projective orbit satisfies
 \[
 \{\lambda S_t^nf:\ \lambda\in\C,\ n\in\N_0\}\subseteq {\rm span}(\{f\})\cup  \{\lambda S_t^nf:\ \lambda\in\C,\ n\in\N\}.
 \]
 But, $S_t^n(H^p)\subseteq H^p_0$ for all $n\in\N$ and so
 \begin{equation}\label{A}
 \{\lambda S_t^nf:\ \lambda\in\C,\ n\in\N_0\}\subseteq {\rm span}(\{f\})\cup  H^p_0.
 \end{equation}
 Since both   ${\rm span}(\{f\})$ and $H^p_0$ are proper closed subsets of $H^p$, also their union is a proper closed subset of $H^p$. It follows from \eqref{A} that the projective orbit of $f$ cannot be dense in $H^p$. Since $f\in H^p$ is arbitrary, we can conclude that $S_t$ is not  supercyclic.
\end{proof}

\vspace{.1cm}

\textbf{Acknowledgements.} The research of A.A. Albanese was partially supported by GNAMPA of INDAM.
The research of J. Bonet was partially supported by the project
PID2020-119457GB-100 funded by MCIN/AEI/10.13039/501100011033 and by
``ERFD A way of making Europe''.

 \vspace{.5cm}

\textbf{Data availability.} Not applicable.

\bibliographystyle{plain}

\end{document}